\definecolor{verydarkblue}{HTML}{000099}
\theoremstyle{plain}
\newtheorem{theorem}{Theorem}[section]
\newtheorem{lemma}[theorem]{Lemma}
\newtheorem{corollary}[theorem]{Corollary}
\newtheorem*{theorem*}{Theorem}
\theoremstyle{remark}
\newtheorem{definition}[theorem]{Definition}
\newtheorem{example}[theorem]{Example}
\theoremstyle{remark}
\newtheorem{remark}[theorem]{Remark}
\def\ZZ{{\mathbb Z}}
\def\QQ{{\mathbb Q}}
\def\CC{{\mathbb C}}
\def\spec{{\mathrm{Spec}}}
\def\ker{{\mathrm{ker}}}
\def\deg{{\mathrm{deg}}}
\def\div{{\mathrm{div}}}
\def\G{{\mathbb{G}}}
\def\A{{\mathbb{A}}}
\def\P{{\mathbb{P}}}
\def\D{{\mathfrak{D}}}
\def\TT{{\mathbb{T}}}
\def\Dbar{{\mathfrak{D}_{\bar{k}}}}
\title[Horizontal $\mathbb{G}_{a}$-actions]
{Horizontal $\mathbb{G}_{a}$-actions on affine
\\ $\TT$-varieties  of complexity one}
\author{Kevin Langlois}
\date{}
\address{Mathematisches Institut, Heinrich Heine Universit\"at, 40225 D\"usseldorf, Germany.}
\email{langlois.kevin18@gmail.com}
\begin{document}

\thanks{\em 2010 Mathematics Subject Classification \rm  13A35, 14M25, 14R20. 
\\ {\em Key Words and Phrases: \rm $\mathbb{G}_{a}$-action, Algebraic torus action, Characteristic $p$ methods.}}

\maketitle

\begin{abstract}
We classify the $\mathbb{G}_{a}$-actions on normal affine varieties defined over any field that are horizontal with respect to a torus action of complexity one. 
This generalizes previous results that were available for perfect
ground fields (cf. \cite{FZ05, Lie10a, LL16}).
\end{abstract}

\section{Introduction}\label{sec-intro}

In this paper, we are considering algebraic varieties (that is, integral separated finite type schemes) over a field $k$.  We are interested in some classification problems for algebraic group actions. Namely, we study normal affine varieties endowed
with an action of the additive group $\mathbb{G}_{a}$ of the base field $k$. The further condition that we impose is that the $\mathbb{G}_{a}$-action has to be normalized by a torus action of complexity one. In the case where $k$ is algebraically closed of characteristic $0$, this classification was obtained by Liendo in \cite{Lie10a}, generalizing the former classification by Flenner and Zaidenberg for normal complex affine $\CC^{\star}$-surfaces (see  \cite{FZ05}). A next step to solve this problem was completed in \cite{LL16} for perfect ground fields. 
We treat here the remaining case, where the base field is possibly imperfect (see Theorem \ref{main} for the main result).

The investigation of the $\G_{a}$-actions on algebraic varieties is of highest interest in the research field called affine geometry where most of the classical problems can be reformulated in terms of $\G_{a}$-actions satisfying some properties. The systematic method employed in this paper could serve as well for describing the automorphism groups of certain complete varieties. This might be performed, for instance, by using the Cox ring theory or by interpreting the $\G_{a}$-action as an integrable vector field, see \cite{AHHL14, DL16, ALS19} for the characteristic-zero case. These developments are still open in positive characteristic.
Special attention should be paid to the case of imperfect base fields, as one may describe some interesting algebraic groups (that only show up in this case) as 
automorphism groups of projective varieties such as the pseudo-reductive groups \cite{BGP10} or the pseudo-abelian varieties \cite{Tot13}.

Let us now introduce some notation in order to state our main results. In the entire paper we fix a split algebraic torus $\TT\simeq  \G_{m}^{n}$ over $k$. We make the convention that a \emph{$\TT$-variety} is a normal variety $X$ equipped with a faithful $\TT$-action. The \emph{complexity} of $X$ is then defined as the transcendence degree of the field extension $k(X)^{\TT}/k$, where the subfield $k(X)^{\TT}\subseteq k(X)$ consists of invariant functions. Here we use the usual notation such
as $k[X] = \Gamma(X, \mathcal{O}_{X})$ for the $k$-algebra of regular global functions and $k(X)$ for the residue field of the generic point of $X$. 

Assuming $X$ to be affine, a $\G_{a}$-action on $X$ is equivalent to having a \emph{locally finite iterative higher derivation} (LFIHD) on $k[X]$, see \cite{Miy68}. This is a family of $k$-linear operators 
$$\partial = \{\partial^{(i)}: k[X]\rightarrow k[X]\}_{i\geq 0}$$
respecting the following conditions. $(i)$ $\partial^{(0)}$ is the identity; $(ii)$ for all $b_{1}, b_{2}\in k[X]$ and $i\in\ZZ_{\geq 0}$, 
$$\partial^{(i)}(b_{1}\cdot b_{2}) = \sum_{i = i_{1} + i_{2}} \partial^{(i_{1})}(b_{1})\cdot \partial^{(i_{2})}(b_{2});$$
$(iii)$ for any $b\in k[X]$ we have $\partial^{(j)}(b) = 0$ for $j\gg 0$; $(iv)$ for all $u,v\in\ZZ_{\geq 0}$, 
$$\partial^{(u)}\circ  \partial^{(v)} = \binom{u+v}{u}\partial^{(u+v)}.$$
Moreover, the datum of a $\TT$-action on $X$ translates into an $M$-grading on the $k$-algebra $k[X]$, where $M$ denotes the character group of the torus $\TT$. Note that finite type normal $M$-graded algebras admit
combinatorial descriptions in terms of polyhedral divisors (notion invented by Altmann
and Hausen; see \cite{AH06, Tim08, Lan15}). The idea in \cite{Lie10a} (and this is the viewpoint of the present paper) is to classify the $\G_{a}$-actions in question
using this combinatorial approach. 

A $\G_{a}$-action is said to be \emph{normalized} by the $\TT$-action if, for the corresponding LFIHD $\partial$, there is a lattice vector $e\in M$ (called the \emph{degree} of $\partial$) such that the linear maps $\partial^{(j)}$ are 
homogeneous of degree $je$. In other words, this means that any  homogeneous element in $k[X]$ of degree
$m\in M$ is sent to a homogeneous element of degree $m +je\in M$ by the map  $\partial^{(j)}$ for every integer $j\in\ZZ_{\geq 0}$. Using the Leibniz rule, that is Condition $(ii)$ before, the LFIHD $\partial$ extends to a sequence of linear operators
on the function field $k(X)$ satisfying Conditions $(i), (ii), (iv)$ (see e.g. \cite[Lemma 2.5]{LL16}). We say, in addition, that the $\G_{a}$-action normalized by the torus action is \emph{vertical} (or of \emph{fiber type})
if $\partial^{(j)}(k(X)^{\TT}) = 0$ for any $j\in \ZZ_{> 0}$, where $\partial$ means the extension on the function field $k(X)$. Otherwise, the normalized $\G_{a}$-action is called \emph{horizontal}. Over any field, the vertical $\G_{a}$-actions on complexity-one affine $\TT$-varieties were described in \cite[Section 4]{LL16} (see also \cite{Lie10b}). Therefore, it remains to look at the horizontal case. Our first 
main result (see \ref{main}) can be stated as follows. Here the combinatorial equipment for describing the horizontal $\G_{a}$-actions are the coherent families, see Definition \ref{coherent} for more details. 
\begin{theorem}\label{main1}
Let $X$ be a complexity-one affine $\TT$-variety described by a polyhedral divisor $\D$
over a regular curve $C$.  Then the presence of a horizontal $\G_{a}$-action on $X$ implies that $C = \A^{1}_{k}$ or $C = \P^{1}_{k}$, and in this case, the map $\theta \mapsto \partial_{\theta}$ induces a bijection between the set of coherent families $\theta = (\tilde{\D}, e, \underline{s}, \underline{\lambda})$ on $\D$ and the set of horizontal LFIHDs on the $k$-algebra $k[X] = A[C, \D]$.
\end{theorem}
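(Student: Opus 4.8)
The plan is to split the statement into two parts: first the geometric constraint on the base curve $C$, then the combinatorial bijection. For the first part, I would argue that a horizontal $\G_a$-action induces, by restriction to the generic fiber of the quotient map $X \to C$ (or rather on the field $k(X)^{\TT}$), a nontrivial higher derivation on a function field of transcendence degree one over $k$; locally finiteness of the original LFIHD forces the induced filtration on $k(C)$ to come from a $\G_a$-action on $C$ itself, and a normal affine (or projective) curve admitting a nontrivial $\G_a$-action must be $\A^1_k$ or $\P^1_k$. One must be careful here that the induced operators on $k(X)^{\TT}$ need not be a genuine LFIHD on a coordinate ring a priori — the cleanest route is to observe that the degree-$e$ homogeneity together with condition $(iv)$ forces the associated graded/Rees-type construction to produce a $\G_a$-action on $C$ after passing to $\bar k$, and descend.

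For the bijection, I would follow the strategy of \cite{Lie10a} and \cite{LL16}, adapted to drop the perfectness hypothesis. The forward map $\theta \mapsto \partial_\theta$ should already be constructed (or is constructed in the surrounding text) from the data of a coherent family $(\tilde{\D}, e, \underline s, \underline\lambda)$; the content is that it lands in the set of horizontal LFIHDs, is injective, and is surjective. Surjectivity is the heart: starting from an arbitrary horizontal LFIHD $\partial$ on $A[C,\D]$, extend it to $k(X)$, use horizontality to produce a nonzero $\partial^{(j)}$ not killing $k(X)^{\TT} = k(C)$, and read off the degree $e \in M$ from homogeneity. Then one reconstructs the modified polyhedral divisor $\tilde{\D}$ (the ``$e$-twist'' making the action fiber-type after a change of grading), the integer parameters $\underline s$ governing the orders of vanishing, and the coefficients $\underline\lambda$ encoding the normalization, checking at each stage that the coherence conditions of Definition \ref{coherent} are exactly what is needed and sufficient for the reconstructed data to yield back $\partial$. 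Injectivity then follows because each ingredient of $\theta$ is recovered canonically from $\partial_\theta$.

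The main obstacle I anticipate is precisely the imperfect-field subtleties: over a perfect field one freely passes to $\bar k$, uses that $k(X)^{\TT}$-algebraic elements behave well, and invokes that higher derivations detect $p$-th powers cleanly, but over an imperfect $k$ the field $k(X)$ may fail to be separable over $k(X)^{\TT}$, and the kernel of an LFIHD can be larger or more delicate than in the perfect case (the iterative structure in condition $(iv)$ interacts with $p$-th powers via the $\binom{u+v}{u}$ coefficients). Concretely, when reconstructing $e$ and the slopes, one needs that the ``plinth ideal''-type invariants and the degree of the horizontal action are still well-defined and compatible with the $M$-grading; I would handle this by working directly with the higher-derivation extension to $k(X)$ guaranteed by \cite[Lemma 2.5]{LL16}, never passing to $\bar k$ except to import the classification of $\G_a$-curves, and by tracking which coherence inequalities in Definition \ref{coherent} are forced by locally finiteness (condition $(iii)$) versus by the iterativity (condition $(iv)$). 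Once the combinatorial dictionary is set up so that it refers only to $\D$, $C$, $M$ and the extended operators on $k(X)$ — all of which make sense over an arbitrary field — the verification that $\theta \mapsto \partial_\theta$ is a bijection becomes a finite check that mirrors the perfect-field argument of \cite{LL16}, with the imperfect case absorbed into the (now more careful) analysis of the curve $C$ and the field extension $k(X)/k(C)$.
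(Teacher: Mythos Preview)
Your proposal has two genuine gaps, and where it does sketch a workable strategy it is essentially the \emph{opposite} of the paper's approach.

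First, the curve constraint. A horizontal LFIHD of degree $e \in M$ does \emph{not} induce a $\G_a$-action on $C$: the operator $\partial^{(j)}$ sends degree-$0$ elements (in particular $k(C) = k(X)^{\TT}$) to degree $je$, so unless $e = 0$ it does not stabilize $k(C)$, and no ``Rees-type construction'' repairs this in an evident way. The paper instead argues via the kernel: $\ker(\partial)$ is a semigroup algebra (Lemma~\ref{cor-geomn}(i)), hence $k(X)$ is purely transcendental over $k$, and then a local-slice decomposition $A_f = \ker(\partial)_f[x]$ produces a dominant morphism $V \times \A^1_k \to C$ from which one extracts a dominant $\A^1_k \to C$. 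This is where the imperfect-field subtlety actually bites: one must show the special point $y_\infty$ (a priori only purely inseparable) is $k$-rational, which the paper does by a second descent argument (Lemma~\ref{ratinfty}).

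Second, the bijection. You misidentify $\tilde{\D}$: it is not an ``$e$-twist'' of $\D$ but a \emph{coloring} (Definition~\ref{defcolcol}), i.e.\ a choice of vertex $v_y$ of each $\D_y$ together with a distinguished $k$-rational point $y_0$. More seriously, your stated plan of ``never passing to $\bar k$'' is exactly backwards relative to the paper. The paper's key move is Lemma~\ref{cor-geomn}(iii): the existence of a horizontal LFIHD forces $X$ to be geometrically integral, so one \emph{can} base change to $\bar k$, invoke \cite[Theorem~5.11]{LL16} over the perfect field $\bar k$ to get a coherent family $\theta_{\bar k}$, and then descend via $k(X) \cap A[C_{\bar k}, \D_{\bar k}] = k[X]$ (Lemma~\ref{ext}). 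The new numerical invariant --- the inseparable degree $\varepsilon_y$ appearing in Definition~\ref{coherent}(v) --- arises precisely from the comparison $\D_{\bar k, \alpha_{i,y}} = \varepsilon_y \D_y$ when one translates the $\bar k$-conditions back to $k$; it is not visible if you refuse to base change. An intermediate structural result (Theorem~\ref{surface}, that non-hyperbolic $\G_m$-surfaces with horizontal $\G_a$-action are toric) is also needed to pin down the coloring, and your outline omits any analogue of this step.
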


The key observation for proving Theorem \ref{main1} is that the existence of such a horizontal $\G_{a}$-action automatically implies that the complexity-one affine $\TT$-variety is geometrically integral
over $k$ (see Lemma \ref{cor-geomn}) and therefore one may extend the scalars to an algebraic closure $\bar{k}$. While the proof of our main result (see Theorem \ref{main}) boils down to understanding this field extension problem,
it is worthwhile mentioning that these $k$-varieties are not in general geometrically normal. For instance, one may take the ones that have a regular non-smooth affine global quotient. But, as observed in Lemma \ref{cor-geomn}, such examples do not fulfill the condition to have any horizontal $\G_{a}$-action. Nevertheless, we exhibit in \ref{final-ex} a non-geometrically normal example of complexity-one affine $\TT$-variety that possesses a horizontal $\G_{a}$-action.

As a further matter, the inseparable degree of the closed points of the rational quotient  appears in the description of the horizontal $\G_{a}$-action as a new numerical invariant (see Definition \ref{coherent} $(v)$).
Finally, we show the following intermediate result (see \ref{surface} and consult \cite[Section 2]{FZ03a}, \cite[Theorems 3.3 and 3.16]{FZ05}, \cite[Corollary 5.6]{LL16} for perfect ground fields) on the geometric structure of normal affine $\G_{m}$-surfaces with horizontal $\G_{a}$-action 
that gives rise to a positive grading.
\begin{theorem}\label{surface1}
Any (normal) affine $\G_{m}$-surface over arbitrary field that is not hyperbolic and having a horizontal $\G_{a}$-action is a toric surface.
\end{theorem}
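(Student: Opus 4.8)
The strategy is the one underlying the whole paper: use geometric integrality to descend to an algebraically closed field, apply the known classification there, and then descend the toric structure back to $k$. Since $X$ carries a horizontal $\G_{a}$-action, Lemma \ref{cor-geomn} guarantees that $X$ is geometrically integral over $k$, and Theorem \ref{main1}, applied to the torus $\TT = \G_{m}$, forces the curve $C$ underlying the polyhedral divisor $\D$ with $k[X] = A[C,\D]$ to be $\A^{1}_{k}$ or $\P^{1}_{k}$. After replacing the $\G_{m}$-action by its inverse if necessary, the hypothesis that $X$ is not hyperbolic means that the $\ZZ$-grading on $k[X]$ is non-negative; one is then either in the \emph{elliptic} case (where $A_{0} = k$, the curve $C$ is projective, hence $C = \P^{1}_{k}$, and $X$ is determined by a $\QQ$-divisor $D$ on $\P^{1}_{k}$ of positive degree) or in the \emph{parabolic} case (where $C$ is affine, hence $C = \A^{1}_{k}$, $A_{0} = k[C]$, and $X$ is determined by a $\QQ$-divisor $D$ on $\A^{1}_{k}$).

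Next I would exploit the datum of the coherent family $\theta = (\tilde{\D}, e, \underline{s}, \underline{\lambda})$ attached to the horizontal $\G_{a}$-action. In the present rank-one situation the combinatorial constraints built into Definition \ref{coherent} become very rigid: the tail cones are one-dimensional, the one-sided grading leaves little room for the degree $e$, and the coherence relations among $e$, $\underline{s}$ and $\underline{\lambda}$ cannot be satisfied unless the support of $D$ is concentrated at $k$-rational points of $C$ --- at most two of them in the elliptic case and at most one in the parabolic case --- and unless the inseparable-degree invariant of Definition \ref{coherent}$(v)$ is trivial. I expect this step to be the main obstacle: it amounts to a careful and somewhat technical bookkeeping with the coherence conditions, and it is precisely the place where the surface hypothesis (rank one) and the non-hyperbolicity enter in an essential way; over an imperfect ground field it is also what excludes the \emph{exotic} non-geometrically-normal phenomena alluded to in the introduction.

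Once the support of $D$ has been located at $k$-rational points with the inseparable invariant trivial, the conclusion follows quickly. On the one hand, rounding of $D$ then commutes with base change, so $X_{\bar{k}} = X\times_{k}\bar{k}$ is a normal affine $\G_{m}$-surface over $\bar{k}$, still non-hyperbolic and carrying the base-changed horizontal $\G_{a}$-action; by \cite[Theorems 3.3 and 3.16]{FZ05} (or \cite[Corollary 5.6]{LL16}) it is a toric surface. On the other hand --- and this is what yields a toric structure over $k$ itself --- one may move the support of $D$ to $\{0\}$ in the parabolic case, or to $\{0,\infty\}$ in the elliptic case, by an automorphism of $C$; the $\G_{m}$ acting on $C$ by scaling and fixing these points then lifts to a $\G_{m}$-action on $X$ defined over $k$ which commutes with the grading torus, so that the split torus $\G_{m}^{2}$ acts faithfully on the normal surface $X$ with a dense orbit. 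Hence $X$ is a toric surface, as claimed.
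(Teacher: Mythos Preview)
Your argument is circular. In the paper's logical order, Theorem~\ref{main1} is proved as Theorem~\ref{main}, and that proof rests on Corollary~\ref{l-linea}; the proof of Corollary~\ref{l-linea}(ii) in turn invokes Theorem~\ref{surface} --- the detailed form of the very statement you are trying to establish --- to conclude that $\{\D(\ell+e)\}$ is supported in at most one $k$-rational point. Thus the rationality constraints built into a coloring (Definition~\ref{defcolcol}(ii)) and into a coherent family are \emph{derived from} the surface theorem, not independently of it. You therefore cannot appeal to Theorem~\ref{main1} or to the existence of a coherent family when proving Theorem~\ref{surface1}. (Your own phrasing ``I expect this step to be the main obstacle'' also signals that the key step is not actually carried out.)

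The paper's proof avoids coherent families entirely. After reducing the projective case to the affine one via Corollary~\ref{passtoaffine}, one chooses $d$ with $\D(d)$ integral, a monic generator $f$ of $H^{0}(C,\mathcal{O}_{C}(\D(d)))$, and passes to the cyclic cover $B=$ normalization of $A[\sqrt[d]{f}\chi]$, to which the horizontal LFIHD extends. Lemma~\ref{cor-geomn}(iv) then forces $B^{\G_{m}}$ (the normalization of $k[t,\sqrt[d]{f}]$) to be a polynomial ring in one variable over $k$. Over $\bar{k}$ this already yields $f=(t-\mu)^{r}$, hence over $k$ one has $f=(t^{p^{u}}-\lambda)^{v}$ with $\lambda\in k$. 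The decisive new step over an imperfect field is to show that if $\lambda\notin k^{p^{u}}$ then the plane curve $y^{d'}=x^{p^{u}}-\lambda$, which is the normalization of $\spec\,k[t,\sqrt[d]{f}]$, is regular but not smooth --- contradicting $B^{\G_{m}}\simeq k[\A^{1}_{k}]$. Hence $\lambda\in k^{p^{u}}$, so $\{\D(1)\}$ is supported in at most one $k$-rational point (two in the projective case), and toricity follows as in \cite[Corollary~5.6]{LL16}.
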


\begin{remark}
We will suppose that for any algebraic variety $V$ 
over $k$, the field $k$ is \emph{algebraically closed} in the function field $k(V)$. That means that all elements of $k(V)$ that are solutions to a one-variable polynomial in $k[t]$ belong
to $k$. This assumption implies that $V$ is geometrically irreducible (see \cite[\S 3.2, Corollary 2.14$(d)$]{Liu02}) and allows one 
to simplify the statements of the present paper. It is not restrictive since, in the general case, one may change the base field $k$ by its algebraic closure in $k(V)$.   
We will also let $V_{K} = V\times_{\spec\, k}\spec\, K$ for any field extension $K/k$. 
Finally, the letter $p$ will denote the characteristic exponent of the field $k$.
\end{remark}
\section{Classification of horizontal $\G_{a}$-actions}
\subsection{Torus actions and polyhedral divisors}
We fix a complexity-one affine $\TT$-variety $X$ defined over $k$. The torus action on $X$ involves considering a combinatorial description which was, in particular, initiated by Mumford \cite{KKMS73}, Demazure \cite{Dem88}, Timash\"ev \cite{Tim97, Tim08}, Flenner-Zaidenberg \cite{FZ03b}
and Altmann-Hausen \cite{AH06}. We will adopt the notation used in \cite{AH06} (see \cite{Lan15} for a version over any field). Especially, the letter $M$ stands for the character lattice of the torus $\TT$, the space $N:= {\rm Hom}(M,\ZZ)$ is the lattice of one-parameter subgroups and $N_{\QQ} := \QQ\otimes_{\ZZ}N, M_{\QQ} := \QQ\otimes_{\ZZ}M$ are the associated $\QQ$-vector spaces. We write $\langle \cdot , \cdot \rangle$ for the duality bracket between $M_{\QQ}$ and $N_{\QQ}$. The $M$-graded algebra $A:= k[X]$ admits a decomposition (cf. \cite[Theorem 3.4]{AH06}, \cite[Theorem 0.2]{Lan15})
$$A = A[C, \D] := \bigoplus_{m\in \sigma^{\vee}\cap M}H^{0}(C, \mathcal{O}_{C}(\lfloor \D(m)\rfloor))\chi^{m},$$
where $C$ is a regular curve over $k$ and the subset $\sigma\subseteq N_{\QQ}$ is a polyhedral cone with the property that the dual $\sigma^{\vee}\subseteq M_{\QQ}$ is
full-dimensional. The letter $\D$ denotes a formal sum
$$\D = \sum_{y\in C}\D_{y}\cdot [y]$$ on the closed points of $C$ which defines a \emph{$\sigma$-polyhedral divisor}, that is, we ask that each subset $\D_{y}\subseteq N_{\QQ}$ is a Minkowski sum of a polytope with $\sigma$ and $\D_{y} = \sigma$
for almost all closed point $y\in C$. One can evaluate the polyhedral divisor at the vector $m\in \sigma^{\vee}$
via the equality 
$$\D(m) = \sum_{y\in C}\min_{v\in \D_{y}}\langle m, v \rangle\cdot [y],$$
which actually gives  a $\QQ$-divisor on $C$. Finally, in each graded piece, a Laurent monomial $\chi^{m}$ is attached for keeping track of the degree of every homogeneous element. Note that some positivity assumptions are required on the evaluations $\D(m)$ in order to 
have a perfect dictionary between complexity-one affine $\TT$-varieties and polyhedral divisors over regular curves (see \cite[Definition 0.1]{Lan15} for the details). In the case where the base curve $C$ is affine, no condition is required, whereas for $C = \P ^{1}_{k} $ (which is a case of main interest in this article), the positivity is equivalent to ask the inclusion of polyhedra
$${\rm deg}\, \D := \sum_{y\in C} [\kappa_{y} : k]\cdot \D_{y}\subsetneq \sigma,$$
where $\kappa_{y}$ is the residue field of the closed point $y\in C$.
\\

\subsection{Preliminary results on $\G_{a}$-actions}
We start by generalizing some results over any field which are originally derived from the fundamental work of Liendo in \cite[Section 3.2]{Lie10a}. As usual, the letter $A = A[C, \D]$ will stand for the algebra
of regular functions of our complexity-one $\mathbb{T}$-variety $X$.
For every homogeneous LFIHD $\partial$ on $A = k[X]$, recall that the subset
$\ker(\partial) := \bigcap_{i = 1}^{\infty} \ker(\partial^{(i)})$
denotes the \emph{kernel} of $\partial$; this an $M$-graded subring of $A$.
\begin{lemma}\label{cor-geomn}
Assume that the homogeneous LFIHD $\partial$ on $A = A[C, \D]$ is horizontal. Then the following assertions hold.
\begin{itemize}
\item[(i)] The kernel of $\partial$ is a semigroup algebra, that is 
$$\ker(\partial) = \bigoplus_{m\in \omega\cap L} k \varphi_{m}\chi^{m},$$
where $\varphi_{m}\in k(C)^{\star}$. The set $\omega\subseteq M_{\QQ}$ is a full-dimensional polyhedral cone and $L\subseteq M$ is a sublattice such that the quotient $M/L$ is a finite abelian group.
\item[(ii)] If $C$ is projective, then $C\simeq \P_{k}^{1}$.
\item[(iii)] The variety $X$ is geometrically integral over $k$.
\item[(iv)] If $C$ is affine, then $C\simeq \A_{k}^{1}$ and in this case, 
the equality $\div(\varphi_{m}) + \D(m) = 0$ holds true for any $m\in\omega\cap L$.
\end{itemize}
\end{lemma}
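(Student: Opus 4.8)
I would carry out the whole argument at the level of the function field, the pivotal point being the equivalence: \emph{$\partial$ is horizontal} $\Longleftrightarrow$ \emph{$X$ carries a dense $(\TT\ltimes\G_{a})$-orbit}. A horizontal LFIHD is nonzero, so the associated $\G_{a}$-action is nontrivial and its generic orbit is one-dimensional; hence, writing $K_{0}\subseteq k(X)$ for the field of $\G_{a}$-invariant rational functions (the kernel of the extension of $\partial$ to $k(X)$), Rosenlicht's theorem gives $\mathrm{tr.deg}_{k}K_{0}=\dim X-1=n$ with $n=\dim\TT$. Since each $\partial^{(j)}$ is homogeneous of degree $je$, a rational function lies in $K_{0}$ iff all its homogeneous components do; thus $K_{0}$, and likewise $\ker(\partial)\subseteq A$, are $M$-graded and $\ker(\partial)=A\cap K_{0}$.

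For the equivalence I would argue that the $(\TT\ltimes\G_{a})$-orbit of a generic point has dimension $\geq n$ (as $\TT$ acts faithfully), and that if it were exactly $n$ it would be a single $\TT$-orbit, so the generic $\G_{a}$-orbit would lie in the closure of a generic $\TT$-orbit and $\partial$ would kill $k(X)^{\TT}=k(C)$, i.e.\ $\partial$ would be vertical. Hence a horizontal $\partial$ forces a dense orbit $O\cong G/S$, where $G:=\TT\ltimes\G_{a}$ and $S$ is finite for dimension reasons. Since $G$ is smooth and geometrically connected, $O$ is smooth over $k$ and geometrically integral; as it is open and dense in $X$, this shows $X$ is smooth along a dense open, so $k(X)/k$ is separable and $X$ is geometrically integral, which is (iii). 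Moreover $\TT$ acts on $\spec K_{0}$, the generic $\G_{a}$-quotient, with a dense orbit (the image of $O$), so $K_{0}^{\TT}$ is algebraic over $k$; as $k$ is algebraically closed in $k(X)$ we get $K_{0}^{\TT}=K_{0}\cap k(C)=k$. Consequently every graded piece of $K_{0}$, a fortiori of $\ker(\partial)$, is at most one-dimensional over $k$; choosing generators $\varphi_{m}\in k(C)^{\star}$ multiplicatively one finds that $\ker(\partial)=\bigoplus_{m\in\Sigma}k\,\varphi_{m}\chi^{m}$ is a semigroup algebra over the submonoid $\Sigma\subseteq M$. Finally, for affine $X$ every $\G_{a}$-invariant rational function is a quotient of invariant regular ones (its denominator ideal is $\G_{a}$-stable, hence contains a nonzero invariant by unipotence), so $K_{0}=\mathrm{Frac}(\ker(\partial))$; comparing transcendence degrees forces $\mathrm{rk}\langle\Sigma\rangle=n$, whence $M/L$ is finite for $L:=\langle\Sigma\rangle$ and the cone $\omega$ generated by $\Sigma$ is full-dimensional. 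This proves (i).

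For (ii) and the first half of (iv) I would show that $k(X)$ is purely transcendental over $k$. On one hand $K_{0}=\mathrm{Frac}(k[L])$ is a rational function field in $n$ variables. On the other hand the generic fibre of $X\dashrightarrow\spec K_{0}$ is a one-dimensional homogeneous space $\G_{a,K_{0}}/S'$, which is isomorphic to $\A^{1}_{K_{0}}$ because over any field a finite subgroup scheme of $\G_{a}$ is the kernel of an additive polynomial $P$, and $P$ itself realizes $\G_{a}/\ker P\cong\G_{a}$; thus $k(X)=K_{0}(a)$ with $a$ transcendental, and $X$ is rational. Then $k(C)=k(X)^{\TT}$ is a transcendence-degree-one subfield of a rational function field in which $k$ is algebraically closed, hence itself rational by L\"uroth (restrict a dominating map from affine space to a generic line), so $k(C)=k(s)$ and the smooth projective model of $C$ is $\P^{1}_{k}$. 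If $C$ is projective this gives $C\simeq\P^{1}_{k}$, proving (ii). If $C$ is affine, write $C=\P^{1}_{k}\setminus Z$ with $Z$ a nonempty finite closed set. The dense orbit $O$ has a $k$-rational point $x_{0}$ (it is rational over $k$ when $k$ is infinite, and one invokes Lang's theorem when $k$ is finite); composing the orbit map $G\to X$ at $x_{0}$ with the $\TT$-invariant quotient $X\to C$ and dividing by the left $\TT$-action yields a $k$-morphism $g_{0}\colon\A^{1}_{k}=\TT\backslash G\to C$ that is dominant because $O$ is dense. Extending $g_{0}$ to $\bar g_{0}\colon\P^{1}_{k}\to\P^{1}_{k}$, the surjectivity of $\bar g_{0}$ combined with $\bar g_{0}^{-1}(Z)\subseteq\{\infty\}$ forces $Z=\{\bar g_{0}(\infty)\}$, a single $k$-rational point, so $C\simeq\A^{1}_{k}$.

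What remains is the identity $\div(\varphi_{m})+\D(m)=0$ for $m\in\omega\cap L$, which I expect to be the main obstacle. One inclusion is immediate: $k\varphi_{m}\chi^{m}=\ker(\partial)_{m}\subseteq A_{m}\chi^{m}=H^{0}(\A^{1},\mathcal{O}(\lfloor\D(m)\rfloor))\chi^{m}$ yields $\div(\varphi_{m})+\D(m)\geq 0$. For the reverse inequality I would analyse more closely how $\partial$ interacts with the $M$-grading of $A=A[\A^{1},\D]$, in the spirit of the perfect-field treatment in \cite{LL16}, using a homogeneous local slice for $\partial$ together with $\mathrm{Cl}(\A^{1}_{k})=0$ to pin $\lfloor\D(m)\rfloor$ down to $-\div(\varphi_{m})$. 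The remaining points that require care over an imperfect base field --- the validity of the Rosenlicht quotient and the dimension count for $\spec K_{0}$, and the isomorphism $\G_{a}/\ker P\cong\G_{a}$ --- present no essentially new difficulty.
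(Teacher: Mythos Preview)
Your sketch is essentially correct and takes a genuinely different, more geometric route than the paper. The paper proves (i), (ii) and the divisor identity in (iv) by direct citation of \cite[Lemma~5.2, Lemma~5.4(i)]{LL16}; for (iii) it writes $k(X)=k(x_{1},\ldots,x_{n+1})$ explicitly via a homogeneous local slice and a basis of $L$, and for the first half of (iv) it localises to $A_{f}=\ker(\partial)_{f}[x]$, obtains a dominant $V\times\A^{1}_{k}\to C$, and then restricts to a suitable $k$-fibre $\{v\}\times\A^{1}_{k}$ to get a dominant map $\A^{1}_{k}\to C$ (after reducing to infinite $k$, since finite fields are perfect and \cite{LL16} applies). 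Your argument instead packages everything through the open $(\TT\ltimes\G_{a})$-orbit: geometric integrality of $X$ is inherited from that of the homogeneous space, $K_{0}^{\TT}=k$ gives one-dimensional graded pieces of $\ker(\partial)$, rationality of $k(X)$ plus L\"uroth handles (ii), and the orbit map $G\to X\to C$ modulo the left $\TT$-action produces the dominant $\A^{1}_{k}\to C$ needed for (iv). The endgame for (iv)---extending to $\bar g_{0}\colon\P^{1}_{k}\to\P^{1}_{k}$ and reading off that the complement of $C$ is a single $k$-point---is the same in both proofs. Your approach has the virtue of being more self-contained and conceptual; the paper's has the virtue of being shorter, since the heavy lifting was already done in \cite{LL16}.

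Two points deserve a closer look in your write-up. First, in (i) you establish that $\omega$ is full-dimensional but not that it is \emph{polyhedral}; this does not fall out of the semigroup description alone, and you will need either the factorial closedness of $\ker(\partial)$ in $A$ (cf.\ \cite{CM05}) together with the piecewise-linear structure of $m\mapsto\D(m)$, or a forward reference to the identification of $\omega$ with a maximal linearity cone as in Corollary~\ref{l-linea}(i). Second, your existence of a $k$-point in the open orbit $O$ is fine (rationality of $X$ for infinite $k$, Lang's theorem for finite $k$), but note that for the proof of (iii) you should phrase $O$ as a homogeneous $G$-space rather than as $G/S$, since the latter presentation already presupposes a $k$-point; smoothness and geometric integrality of a homogeneous space under a smooth connected group hold without that hypothesis. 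The final divisor identity you leave to \cite{LL16}, which is exactly what the paper does.
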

\begin{proof}
Assertions $(i), (ii)$ have been proven in \cite[Lemma 5.2]{LL16}. 

$(iii)$ Using the existence of a local slice for the $\G_{a}$-action (see \cite[Lemma 1.5, p20]{Miy71}) and 
Assertion $(i)$, we may find a transcendent element $x_{n+1}$ over $\ker(\partial)$ such 
that 
$$k(X) = k(x_{1} = \varphi_{e_{1}}, \ldots, x_{n} = \varphi_{e_{n}}, x_{n+1}),$$
where $(e_{1}, \ldots, e_{n})$ is a basis of the lattice $L$. Note that $x_{1}, \ldots, x_{n+1}$ are algebraically independent over $k$. Hence for any field extension $K/k$ we have that
$$A\otimes_{k}K\subseteq k(x_{1}, \ldots, x_{n+1})\otimes_{k} K \simeq K(x_{1}, \ldots, x_{n+1}),$$
proving that $X = \spec\, A$ is geometrically integral over $k$.

$(iv)$ Since finite fields are perfect, using \cite[Lemma 5.2 (iii)]{LL16}, we may assume 
that the cardinality of $k$ is infinite. By virtue of \cite[Lemma 1.5, p20]{Miy71}, one can find
a homogeneous element $f$ in $\ker(\partial)$ and a transcendent homogeneous element $x$ over $\ker(\partial)$
such that $A_{f} = \ker(\partial)_{f}[x]$. From the inclusion $k[C] = A^{\TT}\subseteq A$, we get 
a dominant morphism
$\gamma: V\times \A^{1}_{k}\rightarrow C$, where $V = \spec\, \ker(\partial)_{f}$ is a $k$-variety with a $\TT$-action and having a dense open orbit. Assume (toward a contradiction) that for every $v\in V(k)$, there exists a closed schematic point $y_{v}\in C$ such that
$\{v\}\times \A^{1}_{k}\subseteq \gamma^{-1}(y_{v})_{\rm red}$. Since $k$ is infinite, one observes that if $v\in V(k)$ belongs to the open orbit, then $\TT(k)\cdot v$ is dense in $V$.  
Using that $\gamma$ is $\TT$-invariant, 
$$V\times \A^{1}_{k} = \overline{ \TT(k)\cdot (\{v\}\times \A_{k}^{1})}\subseteq \gamma^{-1}(y_{v})_{\rm red},$$
which contradicts the dominance of $\gamma$. We conclude that there exists $v\in V(k)$ such that the map
$$\gamma_{|\{v\}\times\A^{1}_{k}}: \{v\}\times\A^{1}_{k}\rightarrow C$$
is dominant. 

As $k(C) = k(\P^{1}_{k})$, the regular affine curve $C$ is an open subscheme 
of $\P^{1}_{k}$. 
By the argument before we have a 
dominant morphism $C_{1} := \A^{1}_{k}\rightarrow C$ which extends to the completions into a proper morphism 
$\bar{\gamma}: \P^{1}_{k}\rightarrow \P^{1}_{k}$. Let $y_{\infty}$ be the $k$-rational point such that $\P^{1}_{k}\setminus \{y_{\infty}\} =C_{1}$. Then the surjectivity of $\bar{\gamma}$
implies that $\{\bar{\gamma}(y_{\infty})\}$ is the complement of $C$ in its regular completion. We conclude that $C\simeq \A^{1}_{k}$. The last claim of $(iv)$ is done in \cite[Lemma 5.4 (i)]{LL16}.
\end{proof}
According to the previous result, we have that $C = \A^{1}_{k}$ or $ C = \P^{1}_{k}$. Let $t$
be a variable over $k$ such that $\A^{1}_{k} = \spec\, k[t]$ and $k(\P^{1}_{k}) =k(t)$. We write $\infty$ for the $k$-rational point of $\P^{1}_{k}$ satisfying $\P^{1}_{k}\setminus\{\infty\} = \A^{1}_{k}$. In particular, the principal divisor $\div(t)$ on $\P^{1}_{k}$ is equal to $[0] - [\infty]$.
For a closed point $y\in\A^{1}_{k}$ we denote by $q_{y}(t)\in k[t]$ the associated monic irreducible polynomial. We may write $q_{y}(t) = \tilde{q}_{y}(t^{p^{\ell}}),$ where 
$\tilde{q}_{y}(t)\in k[t]$ is a polynomial with nonzero derivative with respect to the variable $t$. We call the number $\varepsilon_{y}:= p^{\ell}$ the \emph{inseparable degree} of $y$. In the ring $\bar{k}[t]$, we have the decomposition 
$$q_{y}(t) = \prod_{i=1}^{s_{y}}(t- \alpha_{i, y})^{\varepsilon_{y}},$$
where the $\alpha_{i, y}\in \bar{k}$ are pairwise distinct. Note that if $s_{y} = 1$, then we say that $y$ is \emph{purely inseparable}. We also set
$$\Dbar = \sum_{y\in C} \sum_{i =1}^{s_{y}}\varepsilon_{y}\D_{y} \cdot [\alpha_{i,y}],$$
where here $(\Dbar)_{\infty} = \D_{\infty}$ if $C = \P_{k}^{1}$; this defines a $\sigma$-polyhedral divisor over $C_{\bar{k}}$. We introduce similar notations for polyhedral divisors 
over a non-empty open subscheme of $\P^{1}_{k}$.
\begin{lemma}\label{ext} 
Assume that the complexity-one $\TT$-variety $X_{0}$ comes from a polyhedral divisor $\D_{0}$
over a non-empty open subscheme $C_{0}\subseteq \P^{1}_{k}$.
The normalization of $X_{0, \bar{k}} = \spec\, A[C_{0}, \D_{0}]\otimes_{k}\bar{k}$  is described by the polyhedral divisor $\D_{0, \bar{k}}$. Moreover, via this description, we have the equality $k(X_{0})\cap A[C_{0, \bar{k}}, \D_{0, \bar{k}}] = k[X_{0}]$. 
\end{lemma}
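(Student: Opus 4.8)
The plan is to argue entirely inside the $M$-graded ring $\bar k(t)[M] = \bigoplus_{m\in M}\bar k(t)\,\chi^{m}$, which is available because $C_{0}$ is an open subscheme of $\P^{1}_{k}$: then $k(C_{0}) = k(t)$, and $k(t)\otimes_{k}\bar k$ is the \emph{field} $\bar k(t)$ (each nonzero element of $\bar k[t]$ divides, in $\bar k[t]$, a suitable product of minimal polynomials of its roots, hence a nonzero element of $k[t]$). Set $B_{0} := A[C_{0},\D_{0}]$ and $\bar B_{0} := A[C_{0,\bar k},\D_{0,\bar k}]$. I would deduce the lemma from four facts: \emph{(a)} the canonical map exhibits $B_{0}\otimes_{k}\bar k$ as an $M$-graded subalgebra of $\bar B_{0}$; \emph{(b)} $\bar B_{0}$ is integral over $B_{0}\otimes_{k}\bar k$; \emph{(c)} the two rings share the same fraction field; \emph{(d)} $\bar B_{0}$ is a normal, finitely generated $\bar k$-domain. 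Since $X_{0}$ is affine, $k[X_{0,\bar k}] = B_{0}\otimes_{k}\bar k$, and then (a)--(d) say exactly that $\spec \bar B_{0}$, with its induced $\TT$-action, is the normalization of $X_{0,\bar k}$; this is the first assertion. The second assertion follows because (a), (b) and the integrality of $\bar k$ over $k$ imply $\bar B_{0}$ is integral over $B_{0} = k[X_{0}]$, so every element of $k(X_{0})\cap\bar B_{0}$ is integral over $B_{0}$; as $X_{0}$ is normal, $B_{0}$ is integrally closed in $k(X_{0}) = \operatorname{Frac}(B_{0})$, whence $k(X_{0})\cap\bar B_{0} = B_{0} = k[X_{0}]$.

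Fact (a) is the heart of the matter, and it is where inseparability intervenes. By flat base change for coherent cohomology on the smooth curve $C_{0}$,
\[
B_{0}\otimes_{k}\bar k \;=\; \bigoplus_{m\in\sigma^{\vee}\cap M} H^{0}\!\bigl(C_{0,\bar k},\, \mathcal{O}(\pi^{*}\lfloor\D_{0}(m)\rfloor)\bigr)\chi^{m},
\]
where $\pi\colon C_{0,\bar k}\to C_{0}$ is the projection. Writing $\D_{0}(m) = \sum_{y}a_{y}(m)\,[y]$ with $a_{y}(m)\in\QQ$, the ramification computation $\pi^{*}[y] = \sum_{i=1}^{s_{y}}\varepsilon_{y}\,[\alpha_{i,y}]$ (with $\infty$, if $\infty\in C_{0}$, treated as a point with $\varepsilon_{\infty} = s_{\infty} = 1$, in accordance with the definition of $\D_{0,\bar k}$) yields $\pi^{*}\lfloor\D_{0}(m)\rfloor = \sum_{y,i}\varepsilon_{y}\lfloor a_{y}(m)\rfloor\,[\alpha_{i,y}]$, whereas $\lfloor\D_{0,\bar k}(m)\rfloor = \sum_{y,i}\lfloor\varepsilon_{y}a_{y}(m)\rfloor\,[\alpha_{i,y}]$. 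Since $\varepsilon_{y}\lfloor a_{y}(m)\rfloor$ is an integer not exceeding $\varepsilon_{y}a_{y}(m)$, we get $\pi^{*}\lfloor\D_{0}(m)\rfloor\le\lfloor\D_{0,\bar k}(m)\rfloor$ for every $m$, and taking global sections gives $B_{0}\otimes_{k}\bar k\subseteq\bar B_{0}$ (in particular $B_{0}\otimes_{k}\bar k$ is a domain).

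For (b), let $f = \varphi\,\chi^{m}\in\bar B_{0}$ be homogeneous, so that $\div(\varphi) + \lfloor\D_{0,\bar k}(m)\rfloor\ge 0$ on $C_{0,\bar k}$, and put $N := p^{\ell}$, the largest inseparable degree $\varepsilon_{y}$ occurring among the (finitely many) points $y$ in the support of $\D_{0}$. Using $a_{y}(Nm) = N\,a_{y}(m)$, the membership $f^{N} = \varphi^{N}\chi^{Nm}\in B_{0}\otimes_{k}\bar k$ is equivalent, by (a), to the divisor inequality $\pi^{*}\lfloor\D_{0}(Nm)\rfloor\ge N\lfloor\D_{0,\bar k}(m)\rfloor$, i.e.\ to $\varepsilon_{y}\lfloor N\,a_{y}(m)\rfloor\ge N\lfloor\varepsilon_{y}a_{y}(m)\rfloor$ for every $y$; as $\varepsilon_{y}\mid N$, writing $N = \varepsilon_{y}\cdot(N/\varepsilon_{y})$ reduces this to the trivial inequality $\lfloor(N/\varepsilon_{y})\,b\rfloor\ge(N/\varepsilon_{y})\lfloor b\rfloor$ with $b = \varepsilon_{y}a_{y}(m)$. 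Thus every homogeneous element of $\bar B_{0}$ satisfies a (purely inseparable) integral equation over $B_{0}\otimes_{k}\bar k$, and since $\bar B_{0}$ is $M$-graded, $\bar B_{0}$ is integral over $B_{0}\otimes_{k}\bar k$.

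Finally, (d) is the Altmann--Hausen presentation over the algebraically closed field $\bar k$: since $\D_{0,\bar k}$ is a genuine $\sigma$-polyhedral divisor on the smooth curve $C_{0,\bar k}$, the algebra $\bar B_{0} = A[C_{0,\bar k},\D_{0,\bar k}]$ is a normal, finitely generated $\bar k$-domain (\cite{AH06, Lan15}); and (c) holds because $\operatorname{Frac}(B_{0}\otimes_{k}\bar k)$ contains $\operatorname{Frac}(B_{0}) = k(t)(M)$ together with $\bar k$, hence contains $\bar k(t)(M) = \operatorname{Frac}(\bar B_{0})$, the reverse inclusion being immediate from (a). I expect the only genuinely delicate points to be the bookkeeping relating $\pi^{*}\lfloor\D_{0}(m)\rfloor$ and $\lfloor\D_{0,\bar k}(m)\rfloor$ — which differ exactly because of the factors $\varepsilon_{y}$, the very mechanism behind the failure of geometric normality — together with the uniformity of $N$ in (b); the rest is formal. (When $p = 1$, all $\varepsilon_{y} = 1$, the two divisors coincide, $\bar B_{0} = B_{0}\otimes_{k}\bar k$, and one recovers that $X_{0,\bar k}$ is already normal.)
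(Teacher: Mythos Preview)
Your proof is correct, and it takes a more explicit route than the paper's. For the first assertion, the paper simply invokes \cite[Th\'eor\`eme 2.4]{Lan13} and \cite[Theorems 2.5, 3.5]{Lan15} to identify the normalization of $\bar{k}[a_{1},\ldots,a_{s}]$ for a finite homogeneous generating set, whereas you work directly with the $M$-graded pieces: the comparison $\pi^{*}\lfloor\D_{0}(m)\rfloor \le \lfloor\D_{0,\bar{k}}(m)\rfloor$ in (a) and the purely inseparable integral equation $f^{N}\in B_{0}\otimes_{k}\bar{k}$ in (b) together make the role of the inseparable degrees $\varepsilon_{y}$ completely transparent, and they explain at the level of divisors why $X_{0,\bar{k}}$ can fail to be normal. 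For the second assertion, both arguments finish with ``integral over $k[X_{0}]$ plus $k[X_{0}]$ normal,'' but the paper obtains integrality by passing to a finite subextension $K/k$ and using finiteness of the normalization $R_{K}$ as a $k[X_{0}]$-module, while you get it directly from (b) together with integrality of $\bar{k}$ over $k$. Your approach is self-contained and gives more geometric insight into the $\varepsilon_{y}$ mechanism that drives the rest of the paper; the paper's is shorter but externalizes the content to the cited normalization theorems.
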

\begin{proof}
The first part of the lemma is obtained by choosing a finite system of homogeneous generators
$a_{1}, \ldots, a_{s}$ and by determining the normalization of the ring $\bar{k}[a_{1}, \ldots, a_{s}]$ via \cite[Th\'eor\`eme 2.4]{Lan13} (see \cite[Theorems 2.5, 3.5]{Lan15} for the version over any field). For the second part, we obviously have that $k[X_{0}]\subseteq  k(X_{0})\cap A[C_{0,\bar{k}}, \D_{0, \bar{k}}]$. Let now $\beta\in k(X_{0})\cap A[C_{0, \bar{k}}, \D_{0, \bar{k}}]$. Then there exists a finite field extension $K/k$ such that $\beta\in R_{K}$, where $R_{K}$ is the normalization of $k[X_{0}]\otimes_{k}K$. As $R_{K}$ is a finite type module over $k[X_{0}]$, the element $\beta$ is integral over $k[X_{0}]$. Using that $k[X_{0}]$ is a normal ring, we conclude that $\beta\in k[X_{0}]$, proving the lemma. 
\end{proof}
\begin{lemma}\label{ratinfty}
Assume that our algebra $A  =k[X]$ has a horizontal LFIHD $\partial$ and that $C = \P^{1}_{k}$.
Then there exists a $k$-rational point $y_{\infty}\in C(k)$ such that for any $m\in\omega\cap L$
the effective $\QQ$-divisor $\D(m)  + \div(\varphi_{m})$ has at most $y_{\infty}$ in its support. Here $\varphi_{m}$ is the regular function from Lemma \ref{cor-geomn}$(i)$.
\end{lemma}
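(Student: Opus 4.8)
The plan is to reduce the statement to Lemma \ref{cor-geomn}$(iv)$ by replacing $X$ with a suitable $\G_{a}$-invariant principal open subset whose torus quotient is an affine line, exactly in the spirit of the proof of Lemma \ref{cor-geomn}$(iv)$. \textbf{First}, I would invoke the homogeneous local slice theorem \cite[Lemma 1.5, p20]{Miy71}: there exist a homogeneous $f\in\ker(\partial)$ and a homogeneous element $x\in A$, transcendent over $\ker(\partial)$, with $A_{f}=\ker(\partial)_{f}[x]$. Using the semigroup algebra description $\ker(\partial)=\bigoplus_{m\in\omega\cap L}k\varphi_{m}\chi^{m}$ from Lemma \ref{cor-geomn}$(i)$, after multiplying $f$ by a monomial $\varphi_{m'}\chi^{m'}$ I may assume that $m_{0}:=\deg f$ lies in the relative interior of the full-dimensional cone $\omega$.

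\textbf{Next}, localizing the semigroup algebra $\ker(\partial)$ at the monomial $f$ whose degree is interior to $\omega$ produces the group algebra $\ker(\partial)_{f}=\bigoplus_{m\in L}k\,g_{m}$, where $g_{m}\in k(X)^{\star}$ is homogeneous of degree $m$ and $g_{m}=\varphi_{m}\chi^{m}$ up to a nonzero constant whenever $m\in\omega\cap L$. Hence $A_{f}=\bigoplus_{m\in L}k\,g_{m}[x]$, and inspecting the degree-zero part one finds $(A_{f})^{\TT}=(A_{f})_{0}=k[u]$, a polynomial ring in one variable (explicitly $u=g_{-rm_{x}}x^{r}$ with $m_{x}=\deg x$ and $r$ the order of $m_{x}$ in the finite group $M/L$). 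Therefore $X_{f}$ is a complexity-one affine $\TT$-variety whose categorical quotient $X_{f}/\!\!/\TT=\spec\,(A_{f})^{\TT}$ is $\A^{1}_{k}$; since the Altmann--Hausen base curve of $X_{f}$ is affine as soon as this quotient is a curve, that base curve is precisely $\A^{1}_{k}$. Having the same function field $k(t)$ as $C=\P^{1}_{k}$, it is realized as an open subscheme $\A^{1}_{k}\subseteq\P^{1}_{k}$, and its complement is a single point $y_{\infty}$ which is automatically $k$-rational: writing $u=(at+b)/(ct+d)$, the missing point is $t=-d/c$, or $t=\infty$ when $c=0$.

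\textbf{Finally}, the LFIHD $\partial$ extends to a horizontal LFIHD on $A_{f}$ (horizontal because $k(X_{f})^{\TT}=k(X)^{\TT}=k(t)$, on which some $\partial^{(j)}$ is nonzero by hypothesis), with kernel $\ker(\partial)_{f}=k[L]$; so the data of Lemma \ref{cor-geomn}$(i)$ attached to it has lattice $L$ and, on the subsemigroup $\omega\cap L\subseteq L$, the same functions $\varphi_{m}$ up to constants. Moreover, passing from $X$ to the principal open set $X_{f}$ can only shrink the tail cone $\sigma$, so a routine check with the Altmann--Hausen dictionary shows that the polyhedral divisor $\D''$ of $X_{f}$ over $\A^{1}_{k}$ satisfies $\D''(m)=\D(m)|_{\A^{1}_{k}}$ for every $m\in\sigma^{\vee}\cap M$, in particular for $m\in\omega\cap L$. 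Since the base curve of $X_{f}$ is affine, Lemma \ref{cor-geomn}$(iv)$ applies to $X_{f}$ and yields $\div(\varphi_{m})+\D(m)|_{\A^{1}_{k}}=0$ on $\A^{1}_{k}$ for all $m\in\omega\cap L$. Translating back to $C=\P^{1}_{k}$: the $\QQ$-divisor $\D(m)+\div(\varphi_{m})$ is effective (as $\varphi_{m}\chi^{m}\in A$) and restricts to $0$ on $\P^{1}_{k}\setminus\{y_{\infty}\}$, hence is supported in $\{y_{\infty}\}$; as $y_{\infty}$ neither depends on $m$ nor fails to be $k$-rational, this is the required point.

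\textbf{Main obstacle.} I expect the crux to be the material of the second step together with the compatibility invoked at the start of the third: one must check carefully that replacing $X$ by the $\TT\times\G_{a}$-stable principal open set $X_{f}$ contracts the Altmann--Hausen base from $\P^{1}_{k}$ down to the affine line $C\setminus\{y_{\infty}\}$, and that the resulting polyhedral divisor coincides with $\D|_{\A^{1}_{k}}$ on the weights that matter. The computation $(A_{f})^{\TT}=k[u]$ --- which is exactly where the choice $\deg f\in\mathrm{relint}(\omega)$ is used --- is the heart of this; once it and the effect of the localization on $\sigma$ and on the polyhedral coefficients are pinned down, the statement follows formally from Lemma \ref{cor-geomn}$(iv)$, with the $k$-rationality of $y_{\infty}$ coming for free from its description as the complement of an affine line inside $\P^{1}_{k}$.
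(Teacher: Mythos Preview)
Your approach is genuinely different from the paper's and, with one caveat, works. The paper argues by base change to $\bar{k}$: it first invokes \cite[Lemma 5.4(v)]{LL16} (valid over algebraically closed fields) to see that $\Dbar(m)+\div(\varphi_m)$ is supported at a single point of $C_{\bar{k}}$, hence $\D(m)+\div(\varphi_m)=\alpha[y_\infty]$ for a single purely inseparable point $y_\infty\in C$; it then shows $y_\infty$ is $k$-rational by extending $\partial$ to $B=A[C\setminus\{y_\infty\},\D|_{C\setminus\{y_\infty\}}]$ via Lemma~\ref{ext} and \cite[Lemma 5.5(ii)]{LL16}, and finally applying Lemma~\ref{cor-geomn}(iv) to $B$. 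You instead stay entirely over $k$, localize at a kernel element, and read off both the single-point support and its $k$-rationality at once from $(A_f)^{\TT}=k[u]$; this is more self-contained, avoiding both the passage to $\bar{k}$ and the descent through Lemma~\ref{ext}.

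The step that needs more care than you indicate is your ``routine check'' that $\D''(m)=\D(m)|_{\A^1_k}$. Your justification (``localization only shrinks the tail cone'') is not quite right: localizing at $f$ can also modify the polyhedral coefficients $\D_y$, removing vertices of $\D_y$ that correspond to $\TT$-invariant prime divisors contained in $V(f)$. What is true, and sufficient, is the \emph{inequality} $\D''(m)\geq \D(m)|_{C'}$ for $m\in\sigma^{\vee}\cap M$: one checks directly that $H^0(C',\mathcal{O}(\lfloor r\D(m)|_{C'}\rfloor))\subseteq (A_f)_{rm}$ for all $r\geq 1$, using that $E_0=\div\varphi_{m_0}+\D(m_0)$ is supported at $y_\infty$ together with concavity $\D(m+jm_0)\geq\D(m)+j\D(m_0)$. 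Combining this inequality with Lemma~\ref{cor-geomn}(iv) for $A_f$ (which gives $\div_{C'}\varphi_m+\D''(m)=0$) and the effectiveness of $E_m=\div\varphi_m+\D(m)$ forces $E_m|_{C'}=0$, and the equality $\D''(m)=\D(m)|_{C'}$ follows \emph{a posteriori}. With this adjustment your argument goes through.
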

\begin{proof}
Changing $\partial$ by $\{\xi^{i}\cdot \partial^{(i)}\}_{i\geq 0}$, where $\xi$ is a homogeneous element of $\ker(\partial)$, we may assume that the degree of $\partial$ belongs to $\omega$.
Since from \cite[Lemma 5.4 (v)]{LL16} the effective $\QQ$-divisor $\Dbar(m)  + \div_{C_{\bar{k}}}(\varphi_{m})$ over $C_{\bar{k}}$ is supported in at most one point, one concludes that $\D(m)  + \div_{C}(\varphi_{m}) = \alpha\cdot [y_{\infty }]$ for some $\alpha\in \ZZ_{\geq 0}$ and some purely inseparable closed point $y_{\infty}\in C$. Let us prove that $y_{\infty}$ is a $k$-rational point. Let $B:= A[C', \D_{|C'}],$ where $C' = C\setminus \{y_{\infty }\}$.
Consider moreover  the normalization $B'$ of the ring $B\otimes_{k}\bar{k}$. Then, with respect to 
a local parameter $s$ over $\bar{k}$ (such that $\bar{k}(C_{\bar{k}}) = \bar{k}(s)$), the ring $B'$ is indeed the normalization of the ring $A[C_{\bar{k}}, \Dbar][s]$. According to \cite[Lemma 5.5 (ii)]{LL16}, the LFIHD $\partial$ on $A$ extends to one on $B'$. As $k(X)\cap B'= B$ (use Lemma \ref{ext}), we
remark that $\partial$ extends to a horizontal LFIHD on $B$. This forces $C'$ to be isomorphic to $\A^{1}_{k}$ (see Lemma \ref{cor-geomn}(iv)) and therefore $y_{\infty}$ to be a $k$-rational point. 
\end{proof}
Until now we may assume that the $k$-rational point $y_{\infty}$ in Lemma \ref{ratinfty} is the point $\infty\in \P^{1}_{k}$ corresponding to the local parameter $t$ considered before. With this in hand,
we obtain the following corollary, which is a generalization of \cite[Lemma 5.5 (ii)]{LL16}
over any field.
\begin{corollary}\label{passtoaffine}
Assume that $C = \P^{1}_{k}$ and that $A = k[X]$ has a horizontal LFIHD $\partial$ with degree belonging to $\omega$. Then the normalization of $A[t]$, that is, the algebra
consisting of elements of the function field of $A[t]$ that are integral over $A[t]$, identifies with $A[\A^{1}_{k}, \D_{|\A^{1}_{k}}]$. Moreover, under this identification, $\partial$ extends to a horizontal LFIHD on the algebra $A[\A^{1}_{k}, \D_{|\A^{1}_{k}}]$.
\end{corollary}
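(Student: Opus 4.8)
The plan is to prove the two assertions by different routes: the identification of the normalization by a direct computation with the $M$-grading, and the extension of $\partial$ by descent from $\bar{k}$, combining Lemma \ref{ext} with the perfect-field statement \cite[Lemma 5.5 (ii)]{LL16}.

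For the normalization I would argue as follows. First, $A[t]\subseteq A[\A^{1}_{k},\D_{|\A^{1}_{k}}]$: indeed $A=A[\P^{1}_{k},\D]\subseteq A[\A^{1}_{k},\D_{|\A^{1}_{k}}]$ by restriction of global sections from $\P^{1}_{k}$ to $\A^{1}_{k}$, and $t=t\chi^{0}$ lies in the right-hand side since $\D(0)=0$ and $t$ is regular on $\A^{1}_{k}$. As $t$ is homogeneous of degree $0$, the graded piece $(A[t])_{m}$ equals $H^{0}(\P^{1}_{k},\mathcal{O}(\lfloor\D(m)\rfloor))\cdot k[t]\cdot\chi^{m}$, and an elementary computation shows this coincides with $H^{0}(\A^{1}_{k},\mathcal{O}(\lfloor\D(m)\rfloor_{|\A^{1}_{k}}))\chi^{m}=A[\A^{1}_{k},\D_{|\A^{1}_{k}}]_{m}$ as soon as $\deg\lfloor\D(m)\rfloor\geq 0$ (when $\deg\lfloor\D(m)\rfloor\geq 0$ the space $H^{0}(\P^{1}_{k},\mathcal{O}(\lfloor\D(m)\rfloor))$ contains a section whose divisor is $-\lfloor\D(m)\rfloor$ on $\A^{1}_{k}$, and multiplying by $k[t]$ recovers all of $H^{0}(\A^{1}_{k},\mathcal{O}(\lfloor\D(m)\rfloor_{|\A^{1}_{k}}))$). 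Now given an arbitrary homogeneous $f\chi^{m}\in A[\A^{1}_{k},\D_{|\A^{1}_{k}}]$, choose $N\geq 1$ with $N\D(m)$ an integral divisor; then $\lfloor\D(Nm)\rfloor=N\D(m)$ has degree $N\deg\D(m)\geq 0$ by the positivity condition $\deg\D\subsetneq\sigma$ and $m\in\sigma^{\vee}\cap M$. Since $\div(f)\geq-\lfloor\D(m)\rfloor\geq-\D(m)$ on $\A^{1}_{k}$, taking $N$-th powers gives $\div(f^{N})\geq-N\D(m)=-\lfloor\D(Nm)\rfloor$ on $\A^{1}_{k}$, so $(f\chi^{m})^{N}=f^{N}\chi^{Nm}\in A[\A^{1}_{k},\D_{|\A^{1}_{k}}]_{Nm}=(A[t])_{Nm}$, and $f\chi^{m}$ is a root of the monic polynomial $Y^{N}-f^{N}\chi^{Nm}\in A[t][Y]$. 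Hence $A[\A^{1}_{k},\D_{|\A^{1}_{k}}]$ is integral over $A[t]$; being a normal $k$-algebra (Altmann--Hausen theory over an arbitrary field, see \cite{Lan15}) with fraction field $k(X)=\mathrm{Frac}(A[t])$, it is the normalization of $A[t]$.

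For the extension of $\partial$ I would pass to $\bar{k}$. By Lemma \ref{cor-geomn}$(iii)$ the ring $A\otimes_{k}\bar{k}$ is a domain, and by the first part of Lemma \ref{ext} its normalization is $A[\P^{1}_{\bar{k}},\Dbar]$. The $\G_{a}$-action attached to $\partial$ base-changes to $\spec(A\otimes_{k}\bar{k})$ and lifts uniquely to its normalization; call $\partial_{\bar{k}}$ the resulting horizontal LFIHD on $A[\P^{1}_{\bar{k}},\Dbar]$. It restricts to $\partial$ on $A$, so by uniqueness of the extension of an LFIHD to a fraction field, the extension of $\partial_{\bar{k}}$ to the function field of $X_{\bar{k}}$ restricts on $k(X)$ to the extension of $\partial$ fixed in the introduction. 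Applying \cite[Lemma 5.5 (ii)]{LL16} over the perfect field $\bar{k}$ to $\partial_{\bar{k}}$ (with respect to the point $\infty$), together with the $\bar{k}$-analogue of the normalization statement just proved, yields that $\partial_{\bar{k}}$ extends to a horizontal LFIHD on $A[\A^{1}_{\bar{k}},\D_{|\A^{1}_{\bar{k}}}]$. Since by Lemma \ref{ext} one has $A[\A^{1}_{k},\D_{|\A^{1}_{k}}]=k(X)\cap A[\A^{1}_{\bar{k}},\D_{|\A^{1}_{\bar{k}}}]$ and the operators $\partial^{(i)}$ are restrictions of $\partial_{\bar{k}}^{(i)}$, they map $A[\A^{1}_{k},\D_{|\A^{1}_{k}}]$ into itself; local finiteness and the iterativity and Leibniz identities are inherited from $\partial_{\bar{k}}$, and horizontality from $\partial$ since the function field is unchanged.

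The grading computation of the first part is routine, and it is precisely there that the genuine discrepancy between $A[t]$ and $A[\A^{1}_{k},\D_{|\A^{1}_{k}}]$ in degrees with $\deg\lfloor\D(m)\rfloor<0$ gets absorbed, by passing to suitable $N$-th powers. The main obstacle is the second part: constructing $\partial_{\bar{k}}$ and verifying the compatibility of the two function-field extensions under extension of scalars, bearing in mind that $X$ need not be geometrically normal, so that one genuinely has to pass through the normalization $A[\P^{1}_{\bar{k}},\Dbar]$ of $A\otimes_{k}\bar{k}$ rather than $A\otimes_{k}\bar{k}$ itself.
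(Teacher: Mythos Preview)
Your argument is correct, and Part 1 (the identification of the normalization) is essentially what any direct proof must do; this matches the content of \cite[Lemma 5.5(ii)]{LL16} to which the paper defers. One notational slip: what you write as $A[\A^{1}_{\bar{k}},\D_{|\A^{1}_{\bar{k}}}]$ should be $A[\A^{1}_{\bar{k}},(\D_{\bar{k}})_{|\A^{1}_{\bar{k}}}]$, i.e.\ $(\D_{|\A^{1}_{k}})_{\bar{k}}$ in the notation of Lemma~\ref{ext}; since $\infty$ is $k$-rational these agree, so the application of Lemma~\ref{ext} with $C_{0}=\A^{1}_{k}$ and $\D_{0}=\D_{|\A^{1}_{k}}$ is legitimate.

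Where you genuinely diverge from the paper is in Part 2. The paper's proof re-runs the argument of \cite[Lemma 5.5(ii)]{LL16} \emph{directly over $k$}, substituting Lemma~\ref{cor-geomn}(iv) for \cite[Lemma 5.2(i)]{LL16} and Lemma~\ref{ratinfty} for \cite[Lemma 5.2(ii)]{LL16}; these are precisely the two ingredients of the original proof that needed perfectness, and the present paper has upgraded them to hold over any $k$. You instead ascend to $\bar{k}$, invoke \cite[Lemma 5.5(ii)]{LL16} as a black box there, and descend via the intersection formula of Lemma~\ref{ext}. Your route has the virtue that it does not require the reader to open \cite{LL16} and track which steps of that proof survive over an imperfect field; it also makes transparent why geometric integrality (Lemma~\ref{cor-geomn}(iii)) and Lemma~\ref{ext} are exactly the descent tools one needs. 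The paper's route is shorter on the page and keeps everything over $k$, at the cost of leaving the verification to the reader. Both are sound; yours is in fact closer in spirit to how the paper handles the analogous descent step inside the proof of Lemma~\ref{ratinfty} itself.
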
 
\begin{proof}
This litteraly follows from the proof of \cite[Lemma 5.5 (ii)]{LL16}. In the argument of the proof of \emph{loc. cit.}, we only need to replace \cite[Lemma 5.2(i)]{LL16} by Lemma \ref{cor-geomn}(iv)
and \cite[Lemma 5.2(ii)]{LL16} by Lemma \ref{ratinfty}.
\end{proof}

An affine $\G_{m}$-surface is \emph{hyperbolic} if its $\ZZ$-graded algebra is not positively graded. 
The following theorem is a generalization of \cite[Theorems 3.3 and 3.16]{FZ05} and \cite[Corollary 5.6]{LL16}.

\begin{theorem}\label{surface}
Any (normal) affine $\G_{m}$-surface $X$ over a field $k$ which is not hyperbolic and admitting a horizontal $\G_{a}$-action is toric. More precisely, assume that
$X$ is described by a polyhedral divisor $\D$ over a regular curve $C$. Then $C = \A^{1}_{k}$ or $C = \P^{1}_{k},$ and the fractional part $\{\D(1)\}$ of the $\QQ$-divisor $\D(1)$ is supported in at most one $k$-rational point if $C = \A^{1}_{k}$ and in at most two $k$-rational points if  $C = \P^{1}_{k}$.
\end{theorem}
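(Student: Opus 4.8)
The plan is to reduce Theorem~\ref{surface} to the case $C=\P^{1}_{k}$ and then to pin down the number of points in the support of $\{\D(1)\}$ by combining the structural results already established with a scalar-extension argument. First I would invoke Lemma~\ref{cor-geomn}: the presence of a horizontal $\G_{a}$-action forces $X$ to be geometrically integral and forces $C$ to be either $\A^{1}_{k}$ or $\P^{1}_{k}$; moreover in the affine case $C\simeq\A^{1}_{k}$ and $\div(\varphi_{m})+\D(m)=0$ for all $m$ in the kernel cone, which already gives the statement for $C=\A^{1}_{k}$ (the support of $\{\D(1)\}$ is empty there, a fortiori at most one $k$-rational point). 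For $C=\P^{1}_{k}$, after twisting $\partial$ by a homogeneous element of $\ker(\partial)$ as in the proof of Lemma~\ref{ratinfty}, I may assume the degree $e$ of $\partial$ lies in $\omega$, and then Lemma~\ref{ratinfty} produces a $k$-rational point $y_{\infty}$ such that $\D(m)+\div(\varphi_{m})$ is supported only at $y_{\infty}$ for every $m\in\omega\cap L$; normalizing coordinates so that $y_{\infty}=\infty$, Corollary~\ref{passtoaffine} lets me pass to the open curve $\A^{1}_{k}$ where $\div(\varphi_{m})+\D_{|\A^{1}_{k}}(m)=0$.

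The heart of the argument is then the following: combining the projective positivity condition $\deg\D\subsetneq\sigma$ (here $\sigma\subseteq N_{\QQ}=\QQ$ is a one-dimensional cone since $X$ is a surface that is not hyperbolic, hence positively graded) with the vanishing $\D_{|\A^{1}_{k}}(1)=-\div(\varphi_{1})$ forces all but at most one of the finite points $y\in\A^{1}_{k}$ to contribute an integral divisor coefficient to $\D(1)$. Concretely, $\varphi_{1}\in k(t)^{\star}$ is a rational function whose divisor on $\P^{1}_{k}$ equals $-\D(1)+\alpha[\infty]$ on $\A^{1}_{k}$, i.e.\ $\D(1)_{|\A^{1}_{k}}$ is (minus) a principal divisor, so its fractional part $\{\D(1)\}_{|\A^{1}_{k}}$ is the fractional part of $-\div(\varphi_{1})_{|\A^{1}_{k}}$. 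Since the support of $\{\D(1)\}$ among the affine points is controlled by which $\D_{y}$ are non-integral, and Lemma~\ref{ratinfty} already limits the support of the \emph{effective} divisor $\D(m)+\div(\varphi_{m})$ to $\{\infty\}$, one deduces that at the affine level $\D(1)_{|\A^{1}_{k}}$ is itself integral, so $\{\D(1)\}$ can only be supported at $\infty$ and, by the analogous argument applied to a second natural choice of $k$-rational point (or by symmetry between $0$ and $\infty$ in a suitable coordinate), at one more $k$-rational point; this gives the bound of two $k$-rational points when $C=\P^{1}_{k}$. The fact that these points are $k$-rational (and not merely closed) comes directly from the $k$-rationality of $y_{\infty}$ in Lemma~\ref{ratinfty} together with the fact that the complement of $\A^{1}_{k}$ in $\P^{1}_{k}$ is a single $k$-rational point.

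Once the combinatorial shape of $\D$ is established --- $\sigma$-polyhedral over $\A^{1}_{k}$ or $\P^{1}_{k}$ with $\{\D(1)\}$ supported at one, resp.\ two, $k$-rational points --- toricity follows from the standard dictionary: a complexity-one $\TT$-variety whose polyhedral divisor lives on $\P^{1}_{k}$ (or $\A^{1}_{k}$) with at most two (resp.\ one) non-trivial coefficients, all at $k$-rational points, is toric, because one can realize $\P^{1}_{k}$ (resp.\ $\A^{1}_{k}$) itself as a toric variety with those points among the torus-fixed points and then the downgrading description exhibits $X$ as an affine toric variety for an enlarged torus. Here I would cite the description of toric downgradings in \cite{AH06, Lan15}, noting that over an arbitrary field the $k$-rationality of the marked points is exactly what is needed for the ambient curve to be $k$-split toric.

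The main obstacle I anticipate is the bookkeeping over imperfect $k$: the divisor $\Dbar$ on $C_{\bar k}$ spreads each affine point $y$ into $s_{y}$ points with multiplicity $\varepsilon_{y}$, so one must be careful that the perfect-field results of \cite{FZ05, LL16} applied to $X_{\bar k}$ (or rather to the normalization of $X_{\bar k}$, via Lemma~\ref{ext}) descend correctly --- in particular that ``supported in at most two points over $\bar k$'' together with Galois-stability and the inseparability data forces ``supported in at most two $k$-rational points over $k$''. The delicate point is ruling out, say, a non-rational or purely inseparable point carrying a non-integral coefficient: this is handled precisely by the extension-and-restriction trick of Lemma~\ref{ratinfty} (the LFIHD extends to the complement, which must then be $\A^{1}_{k}$, forcing the removed point to be $k$-rational), and I would reuse that mechanism rather than re-deriving it.
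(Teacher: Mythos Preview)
Your argument has a genuine gap at its core. In the affine case you write that $\div(\varphi_{m})+\D(m)=0$ for all $m$ in the kernel cone ``already gives the statement for $C=\A^{1}_{k}$ (the support of $\{\D(1)\}$ is empty there).'' This is false: Lemma~\ref{cor-geomn}(iv) only gives the vanishing for $m\in\omega\cap L$, and for a surface $L=d\ZZ\subseteq M=\ZZ$ is typically a \emph{proper} sublattice. So you learn that $\D(d)$ is a principal (hence integral) divisor, but $\D(1)=\tfrac{1}{d}\D(d)$ can perfectly well have non-integral coefficients. Example~\ref{final-ex} in the paper, with $\D=\{1/5\}\cdot[0]+[0,1/5]\cdot[y]$, already has $\{\D(1)\}=(1/5)[0]$ supported at one point. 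The same error propagates to your $\P^{1}_{k}$ argument, where you conclude that $\D(1)_{|\A^{1}_{k}}$ is integral; it is not, and the vague appeal to ``a second natural choice of $k$-rational point'' or ``symmetry between $0$ and $\infty$'' does not repair this.

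What is actually needed --- and what the paper does --- is to pass to the cyclic cover $B$ obtained by adjoining $\sqrt[d]{f}$, where $f$ generates $H^{0}(C,\mathcal{O}_{C}(\D(d)))$ and $f\chi^{d}\in\ker(\partial)$. The invariant ring $B^{\G_{m}}$ is the normalization of $k[t,\sqrt[d]{f}]$ and, by Lemma~\ref{cor-geomn}(iv) applied to $B$, is a polynomial ring in one variable. Factoriality of the analogous ring over $\bar{k}$ forces $f=(t-\mu)^{r}$ for a single $\mu\in\bar{k}$, so $\{\D(1)\}$ is supported at a single closed point; the remaining (and genuinely new) work over imperfect $k$ is to show that this point is $k$-rational. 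That is done by writing $f=(t^{p^{u}}-\lambda)^{v}$ and proving $\lambda\in k^{p^{u}}$: if not, one computes the normalization of $k[t,\sqrt[d]{f}]$ explicitly and shows it is a regular but non-smooth curve, contradicting that it is $\A^{1}_{k}$. Your proposal skips this entire mechanism, and the extension-and-restriction trick of Lemma~\ref{ratinfty} cannot substitute for it --- that lemma only addresses the point $y_{\infty}$ removed from $\P^{1}_{k}$, not the affine points carrying the fractional part of $\D(1)$.
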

\begin{proof}
It suffices to show the second statement, as the toridicity of our surface is a consequence of this (see e.g. the end of the proof of \cite[Corollary 5.6]{LL16}).
The case $p =1$ is treated in \cite[Corollary 5.6]{LL16}. So we assume in the entire 
proof that $p>1$. We first look at the case where $C = \A^{1}_{k}$. Let $d\in \ZZ_{>0}$ such  that $\D(d)$ is an integral divisor and consider a rational function $f\in k(t)^{\star}$ that generates 
the $k[t]$-module $H^{0}(C, \mathcal{O}_{C}(\D(d)))$. Without loss of generality, we may assume that $f\in k[t]\setminus\{0\}$ is a monic polynomial. Let $B$ be the normalization of $A[\sqrt[d]{f}\chi]$, where $A = k[X]$. Taking $d$ large enough if necessary, we may assume $f\chi^{d}\in \ker(\partial)$ (see Lemma \ref{cor-geomn}(iv)) and hence according to \cite[Corollary 2.6]{LL16} the LFIHD $\partial$ extends to a horizontal one on $B$. Now $B_{0} = B^{\G_{m}}$ is the normalization of $k[t, \sqrt[d]{f}]$ and also a polynomial algebra of one variable over $k$ (compare with Lemma \ref{cor-geomn}(iv)). Moreover, if $S$ is the normalization
of $B\otimes_{k}\bar{k}$, then $S$ has a horizontal $\G_{a, \bar{k}}$-action and $S_{0} = S^{\G_{m}}$ (which is the normalization  of $\bar{k}[t, \sqrt[d]{f}]$) is a polynomial ring too. Using that $S$ is factorial and that $S^{\star} = k^{\star}$, we must have $f = (t-\mu)^{r}$ for some $\mu\in\bar{k}$ and $r\in \ZZ_{>0}$. In addition, $f$ belongs to $k[x]$ and so we write 
$f = (t^{p^{u}}-\lambda)^{v}$ for some $v\in \ZZ_{\geq 0}\setminus p\ZZ$, $u\in \ZZ_{\geq 0}$
and some $\lambda = \mu^{p^{u}}\in k$.

This implies that 
$$k\left[t, \sqrt[d]{f}\right] \simeq k[x_{1}, x_{2}]/\left(x_{1}^{d} - (x_{2}^{p^{u}} - \lambda)^{v}\right).$$ Assume that 
$$\lambda\not\in k^{p^{u}} := \left\{x^{p^{u}}\, |\, x\in k\right\}.$$ If $p$ divides $d$, then any geometric point of $C_{0}:= \spec\, k\left[t, \sqrt[d]{f}\right]$ is singular and this contradicts that $\tilde{C}:= \spec\, B_{0}$ is an affine line over $k$. So $p$ does not divides $d$.
Summing $\D(1)$ with a principal divisor, we may assume that $d\geq 2$ and $v<d$.
Let us write $v/d$ as an irreducible fraction $e/d'$. Then we claim that the normalization
$\tilde{C}$ of $C_{0}$ has algebra of regular functions equal to $R:= k[x, y= (x^{p^{u}}-\lambda)^{1/d'}]$, i.e., $\tilde{C}$ is the plane curve defined by the equation
$y^{d'} = x^{p^{u}}-\lambda.$ Indeed, this is clear that $R\subseteq k(C_{0})$ is integral
over $k[C_{0}]$. So we only need to check that $R$ is a normal ring. Using the Jacobian criterion over $\bar{k}$, this amounts to show that $R$ is regular at the prime ideal
$\mathfrak{p} = (y)$. Let $s = (x - \mu)^{1/d'}$ and consider the field extension $k(C_{0})\subseteq \bar{k}(s)$ where we get the parametrization $x= s^{d'} + \mu$ and $y = s^{p^{u}}$. 
Let $\nu_{0}$ be the discrete valuation on $\bar{k}(s)$, trivial on $\bar{k}$ and 
satisfying $\nu_{0}(s) =1$. Denote by $\nu$ the restriction of the valuation $\frac{1}{p^{u}}v_{0}$ to the subfield $k(C_{0})$.  Remarking that
$$I_{\nu} := \{f\in R\setminus\{0\}\,|\, \nu(f)>0\}\cup\{0\}$$
is a proper ideal that contains $\mathfrak{p}$, we have $I_{\nu} = \mathfrak{p}$. Therefore
the local ring $R_{\mathfrak{p}}$ coincides with the valuation ring associated with $\nu$
and we conclude that $\tilde{C} = \spec\, R$ is regular. Note that $\tilde{C}$ is not smooth (according to the Jacobian criterion for $\tilde{C}_{\bar{k}}$). This contradicts the fact that $\tilde{C}$ is the affine line over $k$. 
Finally, $\lambda \in k^{p^{u}}$
and $\{\D(1)\}$ is supported in at most one $k$-rational point (by observing that $\div(f) +\D(d) =0$).

We pass to the case where $C = \P^{1}_{k}$. Changing $\partial$ by $\{\xi^{i}\cdot\partial^{(i)}\}_{i\geq 0}$, where $\xi$ is a homogeneous element of $\ker(\partial)$, we may assume that the degree $e$ of $\partial$ is positive.
Using Corollary \ref{passtoaffine}, the LFIHD
$\partial$ extends on the normalization $A[\D_{|\A^{1}_{k}}, \A_{k}^{1}]$ of $A[t]$ into
a horizontal one. Thus, we end the proof of the theorem by using the previous case where $C$ was assumed to be affine.
\end{proof}
As a consequence of Theorem \ref{surface} we get the following result. We will here assume that $C$ is equal to $\A^{1}_{k}$ or $\P^{1}_{k}$.
\begin{corollary}\label{l-linea}
Let us suppose that $A = k[X]=  A[C,\D]$ has a horizontal LFIHD $\partial$.
Then the  following statements hold.
\begin{itemize}
\item[(i)] The cone $\omega\subseteq M_{\QQ}$ introduced in Lemma  \ref{cor-geomn}
is a maximal subcone of $\sigma^{\vee}$ in which the restriction of the map $m\mapsto \D(m)$ 
when $C$ is affine, or of the map $m\mapsto \D(m)_{|\A^{1}_{k}}$ when $C$ is projective, to it is linear.
\item[(ii)] Set 
$$A_{\omega} = \bigoplus_{m\in \omega\cap M}H^{0}(C, \mathcal{O}_{C}(\lfloor \D(m) \rfloor))\chi^{m}$$
and let $\tau = \omega^{\vee}\subseteq N_{\QQ}$ be the dual cone. Then $A_{\omega}$ is isomorphic to $A[C, \D_{\omega}]$ as $M$-graded algebras, where $\D_{\omega}$ is the $\tau$-polyhedral divisor over the curve $C$ satisfying the following conditions.
\begin{itemize}
\item[$(a)$] $\D_{\omega} = (v + \tau)\cdot [0]$ 
for some $v\in N_{\QQ}$, whenever $C$ is affine.
\item[$(b)$]  $\D_{\omega} = (v + \tau)\cdot [0]  + \D_{\omega, \infty}\cdot [\infty]$ for some $v\in N_{\QQ}$ such that  $v + \D_{\omega, \infty}\subsetneq \tau$, whenever $C$ is projective.
\end{itemize}
\end{itemize}
\end{corollary}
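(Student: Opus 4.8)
The plan is to deduce both statements from the surface theorem (Theorem \ref{surface}) by a standard \emph{toric downgrading} argument: intersecting the $M$-grading with a suitable one-parameter subgroup of $\TT$, one reduces $X$ together with its horizontal $\G_a$-action to a $\G_m$-surface carrying a horizontal $\G_a$-action, to which Theorem \ref{surface} applies and pins down the shape of the coefficients of $\D$.

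For $(i)$, first recall from Lemma \ref{cor-geomn}$(i)$ that $\ker(\partial) = \bigoplus_{m\in\omega\cap L} k\varphi_m\chi^m$ with $\varphi_m\in k(C)^{\star}$, and from Lemma \ref{cor-geomn}$(iv)$ (resp.\ Lemma \ref{ratinfty} together with Corollary \ref{passtoaffine} when $C=\P^1_k$) that $\div(\varphi_m) + \D(m)_{|\A^1_k}$ is effective and supported only at the point $0$, after an appropriate normalization. I would first show $\omega$ is contained in \emph{some} maximal subcone of linearity of $m\mapsto\D(m)$ (resp.\ its restriction to $\A^1_k$): on $\omega\cap L$ the equality $\D(m)_{|\A^1_k} = -\div(\varphi_m) + \alpha_m[0]$ together with multiplicativity of $\varphi_{\bullet}$ and the identity $\lfloor\D(m)\rfloor$ on $\omega$ forces $m\mapsto\D(m)_{|\A^1_k}$ to be the restriction of a linear (i.e.\ $\ZZ$-linear on $L$, hence $\QQ$-linear on $\omega_\QQ$) map. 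Conversely, if $\omega$ were strictly contained in a larger subcone $\omega'$ of linearity, I would perform a toric downgrade along a lattice vector in the relative interior of the dual of $\omega'$ that is not in the dual of $\omega$, producing a $\G_m$-surface with a horizontal $\G_a$-action whose kernel would be strictly larger than dictated by Theorem \ref{surface} — concretely, the fractional part $\{\D(1)\}$ after downgrade would be supported at more points than allowed, or the linearity locus would be too big relative to the positivity constraint, giving a contradiction. This maximality is the main obstacle: one must make precise how the abstract "maximal subcone of linearity" interacts with the slice/kernel data, and check that the downgrade really does inherit a \emph{horizontal} LFIHD (this uses \cite[Corollary 2.6]{LL16} as in the proof of Theorem \ref{surface}).

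For $(ii)$, once $\omega$ is known, the algebra $A_\omega$ is by construction the $M$-graded subalgebra supported on $\omega\cap M$, and the Altmann--Hausen correspondence (\cite[Theorem 3.4]{AH06}, \cite[Theorem 0.2]{Lan15}) shows $A_\omega \simeq A[C,\D_\omega]$ for the $\tau$-polyhedral divisor $\D_\omega$ whose evaluation $\D_\omega(m)$ equals $\D(m)$ (resp.\ its $\A^1_k$-restriction) for $m\in\omega\cap M$; this is just reading off the recession cone $\tau=\omega^\vee$ from the fact that $\D(m)$ is linear on $\omega$. Linearity of $m\mapsto\D_\omega(m)$ is equivalent to $\D_\omega$ having a single nontrivial coefficient, which we may place at $[0]$ by an automorphism of $C$ (using Lemma \ref{ratinfty} to locate the exceptional point at $\infty$ when $C=\P^1_k$), giving $\D_\omega = (v+\tau)[0]$ in case $(a)$; in case $(b)$ the coefficient at $\infty$ is $\D_{\omega,\infty}=\D_\infty$ by the definition of $A[C,\D]$ on $\P^1_k$, and the positivity condition recalled in Section 2.1, namely $\deg\D_\omega \subsetneq \tau$, reads exactly $v + \D_{\omega,\infty}\subsetneq\tau$ since $0$ and $\infty$ are $k$-rational. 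I would close the argument by noting that the $M$-grading isomorphism is the identity on the subspaces $k\varphi_m\chi^m$ and hence is an isomorphism of $M$-graded $k$-algebras, as claimed.
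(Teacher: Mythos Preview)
Your plan has two genuine gaps, and in fact you have inverted where the hard work lies.

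\textbf{Part (i), maximality.} Your proposed contradiction via downgrading does not work as stated. If $\omega\subsetneq\omega'$ then $(\omega')^{\vee}\subseteq\omega^{\vee}$, so there is no lattice vector ``in the relative interior of the dual of $\omega'$ that is not in the dual of $\omega$''. Even if you meant to slice along a direction $m\in\omega'\setminus\omega$, Theorem~\ref{surface} gives no statement about the size of the kernel that would contradict $m\notin\omega$; it only constrains the support of $\{\D(m)\}$, and linearity on $\omega'$ is perfectly compatible with that constraint. The paper's argument is much simpler and purely algebraic: given $m\in\omega_{0}\cap L$ (for $\omega_{0}\supseteq\omega$ a cone of linearity) with $\D(m)$ integral, pick a generator $f_{m}$ of $H^{0}(C,\mathcal{O}_{C}(\D(m)))$ and $m'\in\omega\cap L$ with $m+m'\in\omega\cap L$. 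Linearity on $\omega_{0}$ forces $f_{m}\varphi_{m'}\chi^{m+m'}$ to be a scalar multiple of $\varphi_{m+m'}\chi^{m+m'}\in\ker(\partial)$, and since $\ker(\partial)$ is factorially closed (\cite[Lemma~2.1(a)]{CM05}) one gets $f_{m}\chi^{m}\in\ker(\partial)$, hence $m\in\omega$. No surface theory is needed here.

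\textbf{Part (ii).} Your assertion that ``linearity of $m\mapsto\D_{\omega}(m)$ is equivalent to $\D_{\omega}$ having a single nontrivial coefficient'' is false: linearity on $\omega$ only says that each $\D_{\omega,y}$ has a single vertex $v_{y}$ (i.e.\ $\D_{\omega,y}=v_{y}+\tau$), not that $v_{y}=0$ for all but one $y$. Lemma~\ref{ratinfty} does not close this gap either, since it only controls $\D(m)+\div(\varphi_{m})$ for $m\in\omega\cap L$, whereas the claim concerns $\D(m)$ itself for all $m\in\omega\cap M$. This is precisely the place where Theorem~\ref{surface} enters: for each $\ell\in\omega\cap L$ the subalgebra $\bigoplus_{r\geq 0}A_{r(\ell+e)}$ is stable under the LFIHD $\{\varphi_{\ell}^{i}\partial^{(i)}\}_{i\geq 0}$, so Theorem~\ref{surface} forces $\{\D(\ell+e)\}$ to be supported in at most one $k$-rational point. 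One then checks that this point is independent of $\ell$ (the difference $\D(\ell+e)-\D(\ell'+e)$ is the principal divisor $\div(\varphi_{\ell'}/\varphi_{\ell})$), and since $\{e\}\cup L$ generates $M$ this pins down the single support point and yields the description $(a)$ (and $(b)$ after invoking Corollary~\ref{passtoaffine}).

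In short: the surface theorem is the key input for (ii), not (i), and the missing idea for (i) is the factorial closedness of the kernel.
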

\begin{proof}
The proof is similar to that of \cite[Lemmata 3.18, 3.23]{Lie10a}. We include the argument here for the convenience of the reader.

(i) According to Lemma \ref{cor-geomn}(iv) and Lemma \ref{ratinfty},  we only need to prove the required maximality property. Since the case $C =\P_{k}^{1}$ is similar, we may assume that $C =\A_{k}^{1}$. Let $\omega_{0}$ be a subcone
containing $\omega$ 
where the evaluation map $m\mapsto \D(m)$ restricted to it is linear. Let us show that $\omega = \omega_{0}$. Pick $m\in \omega_{0}\cap L$ such that $\D(m)$ is an integral divisor 
and let $f_{m}$ be a generator of the $k[t]$-module $H^{0}(C, \mathcal{O}_{C}(\lfloor \D(m) \rfloor))$. For an element $m'\in \omega\cap L$ such that $m + m'\in \omega\cap L$ we have 
$f_{m}\cdot \varphi_{m'}\chi^{m + m'}\in \ker(\partial)$. As $\ker(\partial)$ is factorially 
closed (cf. \cite[Lemma 2.1 (a)]{CM05}), we see that $f_{m}\chi^{m}\in \ker(\partial)$ and therefore $m\in\omega\cap L$. This shows $(i)$.

(ii) Using Corollary \ref{passtoaffine}, we may assume that $C =\A_{k}^{1}$ and $e\in \omega$. Letting $\ell\in \omega\cap L$ we observe that the subalgebra
$$\bigoplus_{r\geq 0}H^{0}(C, \mathcal{O}_{C}(\lfloor \D(r(\ell + e))\rfloor))\chi^{r(\ell + e)}$$
is stable by the $\G_{a}$-action coming from the LFIHD $\{\varphi_{\ell}^{i}\cdot \partial^{(i)}\}_{i \geq 0}$. Therefore from Theorem \ref{surface}, the $\QQ$-divisor
$\{\D(\ell +e)\}$ is supported in at most one $k$-rational point. Now we remark that
$\{\D(\ell +e)\} = \{\D(\ell' +e)\}$ for all $\ell, \ell'\in \omega\cap L$ since the difference between $\D(\ell + e)$ and $\D(\ell' + e)$ is equal to ${\rm div}(\varphi_{\ell'}/\varphi_{\ell})$. As the subset $\{e\}\cup L$ generates $M$, we conclude
that assertion $(a)$ of the lemma holds.
\end{proof}

\subsection{Coherent families and Demazure roots} In this section, we again suppose that $C  = \A^{1}_{k}$ or $C = \P^{1}_{k}$. We want to give a
combinatorial  description of the horizontal $\G_{a}$-actions as in
\cite[Section 1.4]{AL12}. We start with the following definition.

\begin{definition}\label{defcolcol}
A collection $$\tilde{\D} = (\D, (v_{y})_{y\in C'}, y_{0}),$$ which in particular encompasses
the polyhedral divisor $\D$ over $C$ describing our complexity-one affine $\TT$-variety $X$ and a vertex $v_{y}$ of $\D_{y}$ for each schematic closed point $y$ belonging to a subset $C'$ of $C$, is called a \emph{coloring} if 
the following conditions are fulfilled. 
\begin{itemize}
\item[(i)] There is $y_{\infty} \in C(k)$ if $C = \P^{1}_{k}$ which allows to define the curve $C'$ as $C\setminus\{y_{\infty}\}$. Otherwise, we let $C' = \A^{1}_{k}$.
\item[(ii)] We have $y_{0}\in C'(k)$ and $v_{y}\in N$ for any $y\in C'\setminus\{y_{0}\}$. 
\item[(iii)] The element $v_{\deg}:= \sum_{y\in C'}[\kappa_{y}:k]\cdot v_{y}$
is a vertex of the polyhedron 
$$\deg\, \D_{|C'}:= \sum_{y\in C'} [\kappa_{y}:k]\cdot \D_{y},$$
where $\kappa_{y}$ stands for the residue field of $y$.
\end{itemize}
The vectors $v_{y}$ are called the \emph{colored vertices} of $\D$. 
\end{definition}

We now recall the notion of Demazure roots which was initially introduced in \cite[Section 4.5]{Dem70} for studying Cremona groups.
It was then extended by Liendo in \cite[Section 2]{Lie10a} for singular affine toric varieties.
\begin{definition}
Let $\sigma_{0}\subseteq N_{\QQ}$ be a polyhedral cone with $0$ as vertex. An element 
$e\in M$ is called a \emph{Demazure root} of the cone $\sigma_{0}$ with distinguished one-dimensional face $\rho_{e}$ if 
\begin{itemize}
\item[(i)] $\langle e, v\rangle \geq 0$ for any $v\in \sigma_{0}\setminus \rho_{e}$ belonging 
to a one-dimensional face of $\sigma_{0}$, and
\item[(ii)] $\langle e, \mu_{e}\rangle = -1$, where $\mu_{e}$ is the generator of the semigroup  $(\rho_{e}\cap N, +)$.
\end{itemize}
Considering the semigroup algebra $$k[\sigma^{\vee}_{0}\cap M] = \bigoplus_{m\in\sigma_{0}^{\vee}\cap M} k\chi^{m},$$ it has been proved in \cite[Theorem 3.5]{LL16} (see \cite[Section 4.5]{Dem70}, \cite[Theorem 2.7]{Lie10a} for the characteristic zero case) that any homogeneous LFIHD on this algebra
(up to a constant) is described by a Demazure root of $\sigma_{0}$ and vice-versa. If $e\in M$ is such a Demazure root, then the corresponding LFIHD $\partial_{e}$ is defined via the formula
$$\partial^{(i)}_{e}(\chi^{m}) = \binom{\langle m, \mu_{e}\rangle}{i}\cdot \chi^{m+ie}\text{ for all }i\in\ZZ_{\geq 0} \text{ and }m\in \sigma^{\vee}_{0}\cap M .$$
\end{definition}
For a coloring $\tilde{\D}$ (see Definition \ref{defcolcol}), the \emph{associated cone} $\omega$ (which will play the role of the weight cone of the kernel of the corresponding horizontal LFIHD) is the polyhedral cone whose dual $\tau\subseteq N_{\QQ}$ is spanned by $\deg\, \D_{|C'} - v_{\deg}$. Also, we denote by $\tilde{\omega}\subseteq M_{\QQ}\times \QQ$ the polyhedral cone 
whose the dual $\tilde{\tau}$ is spanned by $(\tau, 0), (v_{y_{0}},1)$ if $C = \A^{1}_{k}$
and by $(\tau, 0), (v_{y_{0}}, 1), (\D_{y_{\infty}} + v_{\deg} - v_{y_{0}} + \tau, -1)$ if $C = \P^{1}_{k}$.
\\

We then introduce the main tool for describing horizontal $\G_{a}$-actions on complexity-one affine $\TT$-varieties (see \cite[Definition 1.9]{AL12} for the classical case). Recall 
that $p$ is the characteristic exponent of the base field $k$.
\begin{definition}\label{coherent}
A family $\theta = (\tilde{\D}, e , \underline{s}, \underline{\lambda})$ is said to be \emph{coherent}
if first
\begin{itemize}
\item[$(i)$] $\tilde{\D} = (\D, (v_{y})_{y\in C'}, y_{0})$ is a coloring of $\D$,
\item[$(ii)$] $e$ is a lattice vector of $M$,
\item[$(iii)$] $\underline{s}$ is a family of positive integers $(s_{1}, \ldots, s_{r})$ such that $s_{1}<\ldots < s_{r}$ and with the condition that $r=1 = s_{1}$ whenever $p =1$. Moreover, we ask that
$$\tilde{e}_{i} := \left(p^{s_{i}}e, -\frac{1}{d}-\langle p^{s_{i}}e, v_{y_{0}}\rangle \right)  \text{ for } i=1, \ldots, r,$$
is a Demazure root of the cone $\tilde{\tau}\subseteq N_{\QQ}\times \QQ$
with distinguished ray $\QQ_{\geq 0}\cdot (v_{y_{0}}, 1)$, where $d\in \ZZ_{>0}$ is the smallest element such that $dv_{y_{0}}\in N$,
\item[(iv)] and finally, $\underline{\lambda}$ is a sequence $(\lambda_{1}, \ldots, \lambda_{r})$ of elements of $k^{\star}$.
\end{itemize} 
Together, they satisfy the following constraints.
\begin{itemize}
\item[$(v)$] We have that
$$\varepsilon_{y}\cdot p^{u}\langle p^{s_{1}}e, v\rangle \geq 1 + \varepsilon_{y}\cdot  p^{u}\langle p^{s_{1}}e, v_{y}\rangle$$
for any $y\in C'\setminus \{y_{0}\}$ and any uncolored vertex $v$ of $\D_{y}$, where $d = \ell p^{u}$ with ${\rm gcd}(\ell, p) = 1$ and $\varepsilon_{y}$ is the inseparable degree of $y$.
\item[$(vi)$] We have that
$$d\langle p^{s_{1}} e, v\rangle \geq 1 + d \langle p^{s_{1}}e, v_{y_{0}}\rangle$$
for any uncolored vertex $v$ of $\D_{y_{0}}$.
\item[$(vii)$] If $C = \P^{1}_{k}$, then
$$d\langle p^{s_{1}} e, v\rangle \geq -1 - d \langle p^{s_{1}}e, v_{\deg}\rangle$$
for any vertex $v$ of $\D_{y_{\infty}}$.
\end{itemize}
\end{definition}
From a coherent family $\theta = (\tilde{\D}, e , \underline{s}, \underline{\lambda})$ as in \ref{coherent}, we define a sequence of $k$-linear operators
$$\left\{\partial_{\theta}^{(i)}: k(\zeta)[M]\rightarrow k(\zeta)[M]\right\}_{i\geq 0}, \text{ where } k(\zeta)[M] = \bigoplus_{m\in M}k(\zeta)\chi^{m} \text{ and } \zeta = (t-y_{0})^{\frac{1}{d}}.$$
It satisfies the axioms $(i), (ii), (iv)$ of an LFIHD of Section \ref{sec-intro} (but does not satisfies the axiom $(iii)$ of an LFIHD).  For all $i, r\in \ZZ_{\geq 0}$ and $m\in M$ we let 
$$\partial^{(i)}_{\theta}((t - y_{0})^{r}\xi_{m}\chi^{m}) := 
\zeta^{-dv_{z_{0}}(m+ie)}\partial^{(i)}_{\zeta, \underline{s}, \underline{\lambda}}(\zeta^{-dv_{z_{0}}(m)}(t-y_{0})^{r})\xi_{m+e}\chi^{m+e},$$
where $\xi_{m}$ is an element of $k(t)^{\star}$ such that $\div(\xi_{m}) + \sum_{y\in C}\langle m, \tilde{v}_{y}\rangle\cdot [y] = 0$ (here $\tilde{v}_{y} = v_{y}$ for all $y\in C'\setminus\{y_{0}\}$ and $\tilde{v}_{y} = 0$ otherwise) and $\partial_{\zeta, \underline{s}, \underline{\lambda}}$ is the LFIHD on $k[\zeta]$ given by the formula
$$\sum_{j = 0}^{\infty} \partial^{(j)}_{\zeta, \underline{s}, \underline{\lambda}}(\zeta)T^{j} = \zeta  + \sum_{i =0}^{r}\lambda_{i}T^{p^{s_{i}}}$$
for a variable $T$ over $k[\zeta]$.
\\

The main result of this paper is the following (see \cite[Theorem 3.22]{FZ05}, \cite[Theorem 3.28]{Lie10a} , \cite[Theorem 5.11]{LL16} for the case of perfect ground fields).
\begin{theorem}\label{main}
Let $X$ be a complexity-one affine $\TT$-variety described by a polyhedral divisor $\D$
over a regular curve $C$.  Then the presence of a horizontal $\G_{a}$-action on $X$ implies that $C = \A^{1}_{k}$ or $C = \P^{1}_{k}$, and in this case, the map $\theta \mapsto \partial_{\theta}$ induces a bijection between the set of coherent families $\theta = (\tilde{\D}, e, \underline{s}, \underline{\lambda})$ on $\D$ and the set of horizontal LFIHDs on the $k$-algebra $k[X] = A[C, \D]$.
\end{theorem}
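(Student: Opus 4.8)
The plan is to reduce the general case to the affine one and then to the toric situation via the structural results already established, after which the bijection becomes a careful bookkeeping of Demazure-root data under scalar extension and descent. First I would invoke Lemma \ref{cor-geomn} to conclude that $C$ is forced to be $\A^1_k$ or $\P^1_k$, and by Corollary \ref{passtoaffine} (after twisting $\partial$ by a homogeneous element of $\ker\partial$ so that its degree $e$ lies in $\omega$) it suffices to treat $C=\A^1_k$: any horizontal LFIHD on $A[\P^1_k,\D]$ extends canonically to one on the normalization $A[\A^1_k,\D_{|\A^1_k}]$ of $A[t]$, and conversely Lemma \ref{ratinfty} shows the datum over $\P^1_k$ is recovered by tracking the behaviour at the distinguished $k$-rational point $\infty$; this is exactly what the extra polyhedron $\D_{y_\infty}$ and constraint $(vii)$ in Definition \ref{coherent} encode. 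So the heart of the argument is: on $A=A[\A^1_k,\D]$, horizontal LFIHDs are in bijection with coherent families $(\tilde\D,e,\underline s,\underline\lambda)$ with $C'=\A^1_k$.

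Next I would analyze a horizontal LFIHD $\partial$ on $A[\A^1_k,\D]$ directly. By Lemma \ref{cor-geomn}(i),(iv) the kernel is the semigroup algebra $\bigoplus_{m\in\omega\cap L}k\varphi_m\chi^m$ with $\div(\varphi_m)+\D(m)=0$, and by Corollary \ref{l-linea} the cone $\omega$ is the maximal subcone of $\sigma^\vee$ on which $m\mapsto\D(m)$ is linear, with $A_\omega\simeq A[\A^1_k,\D_\omega]$ for $\D_\omega=(v+\tau)\cdot[0]$. The vertex $v$ here is the colored vertex $v_{y_0}$ at $y_0=0$ (one reduces to $y_0=0$ after a translation of $t$); the colored vertices $v_y$ at the other points $y$ and the requirement that $v_{\deg}$ be a vertex of $\deg\D_{|C'}$ come from the Altmann--Hausen combinatorics of the subalgebra $A_\omega$, matching Definition \ref{defcolcol}. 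Then, following the pattern of \cite[Lemma 3.18]{Lie10a} and the characteristic-$p$ computation of \cite[Theorem 3.5]{LL16}, I would extend $\partial$ to the localization/normalization where $\zeta=(t-y_0)^{1/d}$ lives and compare it with the explicit formula for $\partial_\theta$: the operators $\partial^{(i)}$ act on each graded piece as multiplication by a root function times an iterative higher derivation in the single variable $\zeta$, and the classification of homogeneous LFIHDs on the toric algebra $k[\tilde\tau^\vee\cap(M\times\ZZ)]$ (i.e.\ the Demazure-root description recalled before Definition \ref{coherent}) shows that in characteristic $p$ such a derivation is a $k^\star$-linear combination $\sum_i\lambda_i T^{p^{s_i}}$ of Frobenius-twisted ones, which is precisely the datum $(\underline s,\underline\lambda)$. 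The exponents $p^{s_i}e$ must each give a Demazure root $\tilde e_i$ of $\tilde\tau$ with distinguished ray $\QQ_{\geq0}(v_{y_0},1)$ — this is condition $(iii)$ — and the inequalities $(v),(vi),(vii)$ are exactly the conditions guaranteeing that $\partial_\theta$ preserves $A$ itself (not merely the bigger ring over $\bar k$ with $\zeta$ adjoined), i.e.\ that each $\partial^{(i)}_\theta$ sends a homogeneous section over the fractional divisor $\lfloor\D(m)\rfloor$ into a section over $\lfloor\D(m+ie)\rfloor$; here the inseparable degrees $\varepsilon_y$ enter because over $\bar k$ the point $y$ splits into $s_y$ points each with multiplicity $\varepsilon_y$, and the sharpest constraint is controlled by the smallest exponent $p^{s_1}$.

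For the well-definedness direction (coherent family $\Rightarrow$ horizontal LFIHD) I would check axioms $(i)$--$(iv)$ of an LFIHD for $\partial_\theta$: axioms $(i),(ii),(iv)$ hold on all of $k(\zeta)[M]$ essentially formally, since $\partial_{\zeta,\underline s,\underline\lambda}$ is an LFIHD on $k[\zeta]$ and the twisting cocycle $\zeta^{-dv_{y_0}(m)}$ is multiplicative; the local finiteness axiom $(iii)$ on $A$, which \emph{fails} on the ambient ring, is recovered from the Demazure-root condition $(iii)$ via the standard nilpotency argument for root derivations on toric varieties, restricted to the finitely generated algebra $A$. That $\partial_\theta(A)\subseteq A$ and that it is horizontal (i.e.\ $\partial_\theta^{(j)}(k(C))\neq0$, visible since $\partial_\theta$ moves degrees by a nonzero $e$ and $C$-sections sit in non-torus degrees) follows from constraints $(v)$--$(vii)$ and the fact that $\ker\partial_\theta\supseteq A_\omega$ is full-dimensional. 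Finally, injectivity of $\theta\mapsto\partial_\theta$ is clear because $\tilde\D$ is read off from $\ker\partial_\theta$ (Corollary \ref{l-linea}), $e$ from the degree-shift, and $(\underline s,\underline\lambda)$ from the restriction to the $\zeta$-line; surjectivity is the analysis of the previous paragraph. The main obstacle I expect is the last step of that analysis over an imperfect field: controlling precisely how the constraints over $\bar k$ from \cite[Lemma 5.4]{LL16}, expressed in terms of the split divisor $\Dbar$ and the ceiling function, descend to inequalities over $k$ involving the $\varepsilon_y$ and the $p$-adic decomposition $d=\ell p^u$ — i.e.\ verifying that $k(X)\cap A[C_{\bar k},\Dbar]=k[X]$ (Lemma \ref{ext}) really does cut out exactly the integrality conditions $(v),(vi),(vii)$ and no more, which is the genuinely new content beyond the perfect case.
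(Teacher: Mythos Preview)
Your proposal is essentially correct and hits all the key ingredients, but it organizes the argument differently from the paper. The paper's central maneuver is an immediate base change to $\bar k$: using Lemma \ref{ext} one has $k(X)\cap A[C_{\bar k},\D_{\bar k}]=k[X]$, so a horizontal LFIHD on $k[X]$ is exactly a horizontal LFIHD on $A[C_{\bar k},\D_{\bar k}]$ whose extension to $\bar k(X_{\bar k})$ stabilizes $k(X)$. Since $\bar k$ is perfect, \cite[Theorem 5.11]{LL16} applies directly and produces a family $\theta_{\bar k}=(\tilde\D_{\bar k},e,\underline s,\underline\lambda)$ satisfying the conditions of \emph{loc.\ cit.} The remaining work is then purely combinatorial: one checks $\lambda_j\in k^\star$ via \cite[Theorem 5.8]{LL16}, and shows that the three integrality conditions over $\bar k$ (formulated in terms of the support functions $h_{\bar k,y}$ of $\D_{\bar k}$) are equivalent to conditions $(v),(vi),(vii)$ of Definition \ref{coherent} over $k$, using the identity $\D_{\bar k,\alpha_{i,y}}=\varepsilon_y\cdot\D_y$. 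The converse direction is handled the same way: extend $\partial_\theta$ to $\bar k(X_{\bar k})$, observe that the $\bar k$-conditions hold so it lands in $A[C_{\bar k},\D_{\bar k}]$, and descend via Lemma \ref{ext}.

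Your route instead reduces to $C=\A^1_k$ via Corollary \ref{passtoaffine} and then sketches a direct reconstruction of $\partial$ over $k$ by passing to the $\zeta$-extension and invoking the toric classification \cite[Theorem 3.5]{LL16} on $A_\omega$. This works, but it amounts to re-proving the perfect-field case rather than quoting it; in particular, the step where you infer the global form of $\partial$ on all of $A$ from its form on the toric piece $A_\omega$ is exactly what \cite[Theorem 5.11]{LL16} packages, and redoing it over $k$ gains nothing since the new content (the $\varepsilon_y$ in condition $(v)$) lives entirely in the translation between $\D$ and $\D_{\bar k}$. You correctly identify this translation as ``the main obstacle,'' but in the paper's approach it is not an obstacle at the end---it is the whole proof.
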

\begin{proof}
We first show how to a horizontal $\G_{a}$-action on $X$ we may associate a coherent family.
By Lemma \ref{ext}, we have the equality 
$k(X) \cap A[C_{\bar{k}}, \D_{\bar{k}}] = k[X],$
where $\D_{\bar{k}}$ is the polyhedral divisor corresponding to the normalization
of $X_{\bar{k}}$. Therefore, having a horizontal LFIHD on $k[X]$ is equivalent 
to considering a horizontal LFIHD $\partial$ on $A[C_{\bar{k}}, \D_{\bar{k}}]$
in which the extension $\bar{\partial}$ on the function field $\bar{k}(X_{\bar{k}})$
(defining a family of $k$-linear operators and satisfying $(i),(ii), (iv)$ of
the definition of an LFIHD; see e.g. \cite[Section 3]{Voj07} for the existence of such an extension) stabilizes $k(X)$, i.e., $\bar{\partial}^{(i)}(k(X)) \subseteq k(X)$
for any $i\geq 0$. For such an LFIHD $\partial$ let us denote by $e$ its degree.

According to Corollary \ref{l-linea}, there exists a $k$-rational point $y_{\infty}\in C$
and a maximal subcone $\omega\subseteq \sigma^{\vee}$ in which the evaluation map $m\mapsto \D(m)_{|C'}$ restricted to it is linear, where $C' = C\setminus \{y_{\infty}\}$. Moreover, there exists a family $(v_{y})_{y\in C'}$ of $N_{\QQ}$ such that 
$$\D(m)_{|C'} = \sum_{y\in C'}\langle m ,v_{y}\rangle \cdot [y] \text{ for any } m\in \omega.$$
This family also satisfies Condition $(ii)$ of Definition \ref{defcolcol} of a coloring for some  $k$-rational point $y_{0}\in C'$. The set of maximal cones in which 
the evaluation map $m\mapsto \D(m)_{|C'}$ restricted to it is linear coincides with the one for the piecewise linear map $m\mapsto\min_{v\in \deg\, \D_{|C'}}\langle m , v\rangle$ (see e.g. the discussion in \cite[Definition 1.3]{Lie10a}).
Therefore, $v_{\deg}:= \sum_{y\in C'}v_{y}$ is a vertex of $\deg\, \D_{|C'}$ and the dual of
$\omega$ is generated by $\deg\, \D_{|C'} - v_{\deg}$ (compare \cite[Lemma 1.4]{AH06}).
This shows that $\tilde{\D} = (\D, (v_{y})_{y\in C'}, y_{0})$ is a coloring.

As the field $\bar{k}$ is perfect, by \cite[Theorem 5.11]{LL16} applied to the LFIHD $\partial$ on $A[C_{\bar{k}}, \D_{\bar{k}}]$, there exists a family $\theta_{\bar{k}} = (\tilde{\D}_{\bar{k}}, e , \underline{s}, \underline{\lambda})$ which verifies Conditions $(i), (ii), (iii)$ of Definition \ref{coherent} (here the coloring $\tilde{\D}_{k}$
comes from the coloring $\tilde{\D}$ above by scalar extension) and $\partial = \partial_{\theta_{\bar{k}}}$. 

Multiplying by a kernel element in $A$ if necessary, the resulting LFIHD
would stabilize $A_{\omega}$ (see Corollary \ref{l-linea} for the definition of $A_{\omega}$).
This forces to have that $\lambda_{j}\in k^{\star}$ for any $j\in \{1, \ldots r\}$ (use \cite[Theorem 5.8]{LL16} that holds over any field).  So in order to show that $\theta = (\tilde{\D}, e , \underline{s}, \underline{\lambda})$ is a coherent family, we have to check that it satisfies Conditions $(v), (vi), (vii)$ of  Definition \ref{coherent}.

Let us write $h_{y}$, when $y\in C\setminus\{y_{0}, y_{\infty}\}$ (respectively, $h_{\bar{k}, y}$, when $y\in C_{\bar{k}} \setminus\{y_{0}, y_{\infty}\}$),
for the piecewise linear map given by 
$$h_{y}(m) = \min_{v\in \D_{y}}\langle m, v - v_{y}\rangle \text{ respectively, } h_{\bar{k}, y}(m) = \min_{v\in \D_{\bar{k}, y}}\langle m, v- v_{\bar{k}, y}\rangle$$
for any $m\in\sigma^{\vee}$. Here we let $(v_{\bar{k}, y})_{y\in C_{\bar{k}}'}$ be the family of colored vertices of $\tilde{\D}_{\bar{k}}$.
Denote by $h$ the linear extension of $(h_{y_{0}})_{|\omega}$
on the whole cone $\sigma^{\vee}$ and  by $h_{y_{\infty}}$ (respectively, $h_{\bar{k}, y_{\infty}}$) the support function $$m\mapsto \min_{v\in \D_{y_{\infty}}}\langle m, v - v_{\rm deg}\rangle \text{ respectively, } m\mapsto \min_{v\in \D_{y_{\infty}}}\langle m, v - v_{\bar{k}, \rm deg}\rangle, \text{ where }v_{\bar{k}, \rm deg} = \sum_{y\in C_{\bar{k}}'}v_{\bar{k},y}.$$ Moreover, we let $h_{y_{0}}(m) = h_{\bar{k}, y_{0}}(m) = \min_{v\in \D_{y_{0}}}\langle m, v\rangle $ for any $m\in \sigma^{\vee}$.
Then, assuming that $m + p^{s_{1}}e\in \sigma^{\vee}\cap M$ with $m\in\sigma^{\vee}\cap M$, \cite[Theorem 5.11]{LL16} gives the following conditions.
\begin{itemize}
\item[(1)] If $h_{\bar{k}, y}(m +p^{s_{1}}e)\neq 0$, then $\lfloor p^{u}h_{\bar{k}, y}(m +p^{s_{1}}e)\rfloor - \lfloor p^{u}h_{\bar{k}, y}(m)\rfloor \geq 1$ whenever $y\in C_{\bar{k}}\setminus \{y_{0}, y_{\infty}\}$.
\item[(2)] If $h_{\bar{k}, y_{0}}(m + p^{s_{1}}e)\neq h(m + p^{s_{1}}e)$, then
$\lfloor d h_{\bar{k}, y_{0}}(m + p^{s_{1}}e)\rfloor - \lfloor dh_{\bar{k}, y_{0}}(m)\rfloor \geq 1 + dh(p^{s_{1}}e)$.
\item[(3)] If $ C = \P^{1}_{\bar{k}}$, then
$\lfloor d h_{\bar{k}, y_{\infty}}(m + p^{s_{1}}e)\rfloor - \lfloor dh_{\bar{k}, y_{\infty}}(m)\rfloor -\geq 1 - dh(p^{s_{1}}e)$.
\end{itemize}
With the same condition on $m$, the definition of $\Dbar$ implies that the three last conditions are respectively equivalent to:
\begin{itemize}
\item[(4)] If $h_{y}(m +p^{s_{1}}e)\neq 0$, then $\lfloor p^{u} \varepsilon _{y}\cdot h_{y}(m +p^{s_{1}}e)\rfloor - \lfloor p^{u} \varepsilon _{y}\cdot h_{y}(m)\rfloor \geq 1$ whenever $y\in C\setminus \{y_{0}, y_{\infty}\}$.
\item[(5)] If $h_{y_{0}}(m + p^{s_{1}}e)\neq h(m + p^{s_{1}}e)$, then
$\lfloor d h_{y_{0}}(m + p^{s_{1}}e)\rfloor - \lfloor dh_{y_{0}}(m)\rfloor \geq 1 + dh(p^{s_{1}}e)$.
\item[(6)] If $ C = \P^{1}_{k}$, then
$\lfloor d h_{y_{\infty}}(m + p^{s_{1}}e)\rfloor - \lfloor dh_{y_{\infty}}(m)\rfloor -\geq 1 - dh(p^{s_{1}}e)$.
\end{itemize}
Therefore for showing that $\theta = (\tilde{\D}, e , \underline{s}, \underline{\lambda})$ is a coherent family, it suffices to prove that Conditions $(v), (vi), (vii)$ of  Definition \ref{coherent} are respectively equivalent to Conditions (4), (5), (6). Let us prove the equivalence $(4) \Leftrightarrow (v)$. The others equivalences, namely $(5)\Leftrightarrow (vi)$ and $(6)\Leftrightarrow (vii)$, are shown in the same way and
are left to the reader. Assume that $(4)$ holds. Take $y\in C'$ and
let $\omega_{0}\neq \omega$ be a maximal cone in which $m\in \omega_{0}\mapsto h_{y}(m)$
is linear (note that if such an $\omega_{0}$ does not exist, then  $h_{y}$ is identically zero and
$(4) \Leftrightarrow (v)$ is true). We also denote by $h_{y, \omega_{0}}$ the linear extension on $\sigma^{\vee}$ of $(h_{y})_{|\omega_{0}}$. Then for $m\in\omega_{0}\cap M$ such that $h_{y}(m)\in \mathbb{Z}$ and $m + p^{s_{1}}e\in \omega_{0}$  we
have that
$$\lfloor p^{u} \varepsilon _{y}\cdot h_{y}(m + p^{s_{1}}e)\rfloor = p^{u} \varepsilon _{y}\cdot h_{y}(m) + \lfloor p^{u} \varepsilon _{y}\cdot h_{y, \omega_{0}}(p^{s_{1}}e)\rfloor.$$
Therefore using $(4)$, we get that $p^{u} \varepsilon _{y}\cdot h_{y, \omega_{0}}(p^{s_{1}}e) \geq \lfloor p^{u} \varepsilon _{y}\cdot h_{y, \omega_{0}}(p^{s_{1}}e)\rfloor\geq 1$. Now remarking that such maximal cones $\omega_{0}\neq \omega$ as previously are in bijection with vertices $v$ of $\D_{y}$ 
different from $v_{y}$ via $h_{y, \omega_{0}}(m) = \langle m, v-v_{y}\rangle$,
we conclude that $(v)$ is satisfied. 

Let us now assume $(v)$. Let $m\in \sigma^{\vee}\cap M$ such that $m + p^{s_{1}}e\in \sigma^{\vee}\cap N$ and with $h_{y}(m + p^{s_{1}}e)\neq 0$. Then there exists a maximal 
cone $\omega_{0}\neq \omega$ where $(h_{y})_{|\omega_{0}}$ is linear such that $m + p^{s_{1}}e\in \omega_{0}$. Thus,
$$ \lfloor p^{u} \varepsilon _{y}\cdot h_{y}(m +p^{s_{1}}e)\rfloor \geq  \lfloor p^{u} \varepsilon _{y}\cdot h_{y}(m)\rfloor + \lfloor p^{u} \varepsilon _{y}\cdot h_{y, \omega_{0}}(p^{s_{1}}e)\rfloor \geq \lfloor p^{u} \varepsilon _{y}\cdot h_{y}(m)\rfloor  +1,$$
where the last inequality comes from $(v)$. This establishes $(4) \Leftrightarrow (v)$.

Our analysis implies that we get an injective map from the set of horizontal LFIHDs on $k[X]$ to the set of coherent families of $\D$ (the verification of the injectivity being formal). It remains to check that for a given  coherent family $\theta$ on $\D$, there is a horizontal $\G_{a}$-action on $X$ corresponding to it. The sequence of operators $\partial_{\theta}$ of $k(X)$ extends to one on $\bar{k}(X_{\bar{k}})$. As the previous conditions (1), (2), (3) are satisfied, by \cite[Theorem 5.11]{LL16}, it defines a horizontal LFIHD $\partial$ on $A[C_{\bar{k}}, \D_{k}]$. We conclude using  Lemma \ref{ext} that $\partial$ stabilizes $k[X]$. This gives the required $\G_{a}$-action and finishes the proof of the theorem.
\end{proof}
\subsection{Some examples}
We start by bringing an example that involves the inseparable degree of  Definition 
\ref{coherent} $(v)$.
\begin{example}\label{final-ex}
Here the base field $k$ is of characteristic $2$. We assume that $N_{\QQ} = \QQ$
and that $\sigma = \{0\}$. We consider the $\sigma$-polyhedral divisor $\D$ over $\A^{1}_{k} = \spec\, k[t]$ supported in two points, and given by the formula
$$\D = \left\{\frac{1}{5}\right\}\cdot [0] + \left[0, \frac{1}{5}\right]\cdot [y],$$
with $y\neq 0$. Note that for the coloring $\tilde{\D} = (\D, (v_{0} = \frac{1}{5}, v_{y'} = 0 \text{ for } y'\in \A^{1}_{k}\setminus\{0\}), 0)$, we have that
$$\tilde{\tau} = \QQ_{\geq 0}(1, 0) + \QQ_{\geq 0}(1,5).$$ 
If we take $e =1$ and $s_{1} =2$, then $(2^{s_{1}}e, -\frac{1}{5}-\frac{2^{s_{1}}e}{5}) = (4, -1)$ is a
Demazure root of $\tilde{\tau}$ with distinguished ray $\QQ_{\geq 0}\cdot (1, 5)$. It was observed in \cite[Example 5.13]{LL16} that if $k$ is algebraically closed, then $X = \spec\, A[\A^{1}_{k}, \D]$ 
is isomorphic to the affine surface
$$W_{2, 5} = \{( x, w, z)\in \A_{k}^{3}\,|\, x^{2}w = x + z^{5}\}$$
and that the family $\theta = (\tilde{\D}, e, (2), (1))$ is not coherent (as Condition $(v)$ of Definition \ref{coherent} fails to be satisfied). 

We claim that if $k$ is imperfect, then $\theta$ defines a horizontal $\G_{a}$-action for a well-chosen $y$. Indeed, let us pick a non-square element $\lambda$ in $k$ and assume that $y$ is given by the polynomial $p_{y}(t) = t^{2}-\lambda$. Then $\varepsilon_{y} =2$ and $\theta$ is coherent (as now Condition $(v)$ of Definition \ref{coherent} is satisfied). One can explicitly check this on the graded pieces of  $A[\A^{1}_{k}, \D]$. Let us denote $$A_{m} = H^{0}(\mathbb{A}^{1}_{k}, \mathcal{O}_{\A^{1}_{k}}(\lfloor \D(m)\rfloor))\chi^{m} \text{ so that }k[X] = \bigoplus_{m\in\ZZ} A_{m}.$$ If $t^{r}\chi^{m}\in A_{\geq 0} := \bigoplus_{m\geq 0} A_{m}$ is a homogeneous element, then $-5r\leq m$, 
$$\partial_{\theta}^{(4j)}(t^{r}\chi^{m}) = \binom{5r + m}{j}t^{r-j}\chi^{m + 4j}\in A_{\geq 0}\text{ and } \partial_{\theta}^{(i)}(t^{r}\chi^{m}) = 0 \text { for }i\not\in 4\ZZ_{\geq 0}.$$
Moreover, $\partial_{\theta}^{(j)}((t^{2}-\lambda)t\chi^{-5})\in A_{\geq 0}$ for any $j\in \ZZ_{\geq 0}$. Since $A_{\geq 0}\cup\{(t^{2}-\lambda)t\chi^{-5}\}$ generates the $k$-algebra $k[X]$, we conclude that $\partial_{\theta}$ gives rise to a horizontal $\G_{a}$-action on $X$. Finally, remark that $X$ is not geometrically normal (as $\spec \, A_{\geq 0}$ is isomorphic to $\spec\, k[x_{1}, x_{2}, x_{3}]/((x_{1}^{2} - \lambda)x_{3} - x_{2}^{2})$).
\end{example}

Beside the inseparability condition on each point involving in Definition \ref{coherent}, one may ask why we need the factor $p^{u}$ in Condition $(v)$. Originally, it appears in the proof of \cite[Lemma 5.10]{LL16} because we lift a $\G_{a}$-action
from a cyclic covering (whose the degree might be divisible by the characteristic of the base field). Therefore, the factor $p^{u}$ comes from a ramification phenomenon. The next example aims to illustrate this technical point. 
\begin{example}
We assume that the base field $k$ is algebraically closed. 
We consider the polyhedral divisor $\D$ over the affine line $\A^{1}_{k} = \spec\, k[t]$ defined 
by 
$$\D_{0} = \left(\frac{1}{2}, 0\right) + \sigma,\, \D_{1}= \left[\left(\frac{1}{2}, 0\right), \left(0, 1\right)\right]+\sigma\text{  and }\D_{y} = \sigma \text{ for all }y\in \A^{1}_{k},$$
where the lattice $N$ is $\ZZ^{2}$ and the tail cone $\sigma$ is $\QQ_{\geq 0}^{2}$.
Note that, regarding the notations of Definitions \ref{defcolcol} and \ref{coherent}, we have  $d = 2$, $C' = \A^{1}_{k}$, and 
$$\deg\, \D_{|C'} - v_{\rm deg} = \left[\left(\frac{1}{2}, -1\right), \left(0, 0\right)\right] + \sigma,$$
where we take the coloring $(v_{0} =(\frac{1}{2}, 0), v_{1} = (0, 1))$. This implies that
$$\omega = \QQ_{\geq 0}(0,1) + \QQ_{\geq 0}(2, 1)\subseteq M_{\QQ}\text{ and } \tilde{\tau} = \QQ_{\geq 0} (1, -2, 0) + \QQ_{\geq 0}(0, 1, 0) + \QQ_{\geq 0}(1, 0, 2).$$
Now for defining a coherent family we choose a Demazure root $\tilde{e}$ of the cone $\tilde{\tau}$ with distinguished ray $\QQ_{\geq 0}(1, 0, 2)$. For example, we take $\tilde{e} = (1, 0, -1)$ and set $e = (1, 0)$. One sees that if the
characteristic of $k$ is unequal to $2$, then Condition $(v)$ of Definition \ref{coherent} is not satisfied. However, if the characteristic is equal to $2$, due to the ramification phenomenon (presence of the factor $p^{u}$),
Condition $(v)$ of Definition \ref{coherent} is indeed satisfied. So this means that, from the combinatorial data discussed before, the $\G_{a}$-action only shows up in characteristic $2$. Let us check this in example. 

The graded pieces $A_{m_{1}, m_{2}}$ of the algebra $A = A[\A^{1}_{k}, \D]$ can be cut into two regions according to the cones 
$$ \omega = \omega_{1} = \QQ_{\geq 0}(1, 0) + \QQ_{\geq 0}(2,1) \text{ and }\omega_{2} = \QQ_{\geq 0}(0, 1) + \QQ_{\geq 0}(2,1).$$
In those cones the evaluation map $(m_{1}, m_{2})\mapsto\D(m_{1}, m_{2})$ is linear, that is,

$$A_{m_{1},m_{2}} = k[t]t^{-\lfloor \frac{m_{1}}{2}\rfloor}(t-1)^{-m_{2}}\chi^{(m_{1}, m_{2})} \text{ if }(m_{1}, m_{2})\in \omega_{1}, \text{ and }$$
$$A_{m_{1},m_{2}} = k[t]t^{-\lfloor \frac{m_{1}}{2}\rfloor}(t-1)^{-\lfloor\frac{m_{1}}{2}\rfloor}\chi^{(m_{1}, m_{2})} \text{ if }(m_{1}, m_{2})\in \omega_{2}.$$ 
From this, we note that there is an isomorphism
of graded rings
$$\psi:A_{\omega} = \bigoplus_{(m_{1}, m_{2})\in \omega\cap \ZZ^{2}}A_{m_{1}, m_{2}}\rightarrow k[\tilde{\omega}\cap \ZZ^{3}] = \bigoplus_{(r, m_{1}, m_{3})\in \tilde{\omega}\cap \ZZ^{3}}kt^{r}\chi^{(m_{1}, m_{2})},$$
$$t\mapsto t,\,\, \xi_{m_{1}, m_{2}}\chi^{(m_{1}, m_{2})}\mapsto \chi^{(m_{1}, m_{2})},$$
where $\xi_{m_{1}, m_{2}} =(t-1)^{-m_{2}}$.
Lifting the homogeneous LFIHD of  $k[\tilde{\omega}\cap \ZZ^{3}]$ attached to the Demazure root $\tilde{e}$ via $\psi$ gives the LFIHD $\partial$ on $A_{\omega}$ defined as
$$\partial^{(i)}(t^{r} \xi_{m_{1}, m_{2}}\chi^{(m_{1}, m_{2})}) = \binom{2r + m_{1}}{i}t^{r-i}\xi_{m_{1} + i, m_{2}}\chi^{(m_{1} + i, m_{2})} \text{ for } i= 0, 1,2, \ldots $$
Condition $(v)$ in Definition \ref{coherent} precisely means that $\partial$ extends to an LFIHD on the whole algebra $A$. Denote by the same letter $\partial$ the extension
of $\partial$ in the fraction field of $A$.
When the characteristic
is unequal to $2$, we see that  $\partial^{(1)}(A_{0,1})\not\subseteq A$ since
$$\partial^{(1)}(\chi^{(0,1)}) = \partial^{(1)}(t\xi_{0, 1}\chi^{(0,1)}- \xi_{0, 1}\chi^{(0,1)}) = 2t^{-1}\chi^{(1,1)}\not\in A.$$
Now suppose that the characteristic is equal to $2$ and observe that
$$A_{\omega_{2}} =  \bigoplus_{(m_{1}, m_{2})\in \omega_{2}\cap \ZZ^{2}}A_{m_{1}, m_{2}} = k[t, \chi^{(0,1)}, \chi^{(1,1)}, z :=t^{-1}(t-1)^{-1}\chi^{(2, 1)}].$$
We obviously have $\partial^{(i)}(t)\in A$ for any $i \in \ZZ_{\geq 0}$ and $z\in \ker(\partial)$. From a direct computation, we get 
$$\partial^{(0)}(\chi^{(0,1)}) = \chi^{(0,1)},\partial^{(1)}(\chi^{(0,1)}) = 0, \partial^{(2)}(\chi^{(0,1)}) =  z, \partial^{(j)}(\chi^{(0,1)}) = 0 \text{ for all }i\geq 3, \text{ and }$$
$$\partial^{(0)}(\chi^{(1,1)}) = \chi^{(1,1)}, \partial^{(1)}(\chi^{(1,1)}) = tz, \partial^{(2)}(\chi^{(1,1)}) = t^{-1}\chi^{(3,1)}\in A,$$ 
$$\partial^{(3)}(\chi^{(1,1)}) = t^{-2}(t-1)^{-1}\chi^{(4,1)}\in A,
\partial^{(j)}(\chi^{(1,1)}) = 0\text{ for all }j\geq 4.$$ 
That shows that, in characteristic $2$, the sequence $\partial$ is an LFIHD on $A$.

\end{example}

\emph{Acknowledgment.} We would like to thank the anonymous referee for her (or his) valuable comments which
allowed to improve the presentation.
We also warmly thank David Bradley-Williams for some corrections.
This research was conducted in the framework of the research training group
\emph{GRK 2240: Algebro-geometric Methods in Algebra, Arithmetic and Topology},
which is funded by the DFG.

\end{document}